\numberwithin{equation}{section}
\numberwithin{theorem}{section}
\numberwithin{lemma}{section}
\numberwithin{remark}{section}
\begin{document}

\title{Numerical approximations for the fractional Fokker-Planck equation with two-scale diffusion}


\author{Jing Sun$^{1}$, Weihua Deng$^{*,1}$,
Daxin Nie$^{1}$}


\institute{
$^{*}$Corresponding author. E-mail: dengwh@lzu.edu.cn            \\
$^{1}$School of Mathematics and Statistics, Gansu Key Laboratory of Applied Mathematics and Complex Systems, Lanzhou University, Lanzhou 730000, P.R. China \\
}

\date{Received: date / Accepted: date}

\maketitle

\begin{abstract}
Fractional Fokker-Planck equation plays an important role in describing anomalous dynamics. To the best of our knowledge, the existing discussions mainly focus on this kind of equation involving one diffusion operator. In this paper, we first derive the fractional Fokker-Planck equation with two-scale diffusion from the L\'evy process framework, and then the fully discrete scheme is built by using the $L_{1}$ scheme for time discretization and finite element method for space. With the help of the sharp regularity estimate of the solution, we optimally get the spatial and temporal error estimates. Finally, we validate the effectiveness of the provided algorithm by extensive numerical experiments.

\keywords{Fractional Fokker-Planck equation \and two-scale diffusion \and finite element \and $L_{1}$ scheme \and error estimates}
\end{abstract}


\section{Introduction}
We provide the numerical methods for the fractional Fokker-Planck equation with two-scale diffusion, i.e.,
\begin{equation}\label{eqretosol}
	\left\{\begin{aligned}
		&{}_{0}\partial^{\alpha}_{t}(u-u_{0})+(-\Delta)u+(-\Delta)^{s}u=f,\quad (x,t)\in\Omega\times(0,T],\\
		&u=0,\quad (x,t)\in\Omega^{c}\times(0,T],\\
		&u(0)=u_{0},\quad x\in\Omega,\\
	\end{aligned}\right .
\end{equation}
where $\Omega\subset \mathbb{R}^{n}$ ($n=1,2,3$) is a bounded domain with smooth boundary and $\Omega^{c}$ means its complement set; $f$ is a given source term; $T$ is the fixed terminal time; $\Delta$ denotes Laplace operator; $(-\Delta)^{s}$ is the fractional Laplacian  defined by
\begin{equation*}
	(-\Delta)^{s}u(x)=c_{n,s}{\rm P.V.}\int_{\mathbb{R}^{n}}\frac{u(x)-u(y)}{|x-y|^{n+2s}}dy,\quad s\in(0,1)
\end{equation*}
with $c_{n,s}=\frac{2^{2s}s\Gamma(n/2+s)}{\pi^{n/2}\Gamma(1-s)}$ and ${\rm P.V.}$ denotes the principal value integral; ${}_{0}\partial^{\alpha}_{t}$ is the Riemann-Liouville fractional derivative defined by \cite{Podlubny.1999Fde}
\begin{equation*}
	{}_{0}\partial^{\alpha}_{t}u=\frac{1}{\Gamma(1-\alpha)}\frac{\partial}{\partial t}\int_{0}^{t}(t-\xi)^{-\alpha}u(\xi)d\xi
\end{equation*}
with $\alpha\in(0,1)$.

Now we briefly state how to derive Eq. \eqref{eqretosol} from the framework of the  L\'evy process \cite{Applebaum.2009Lpasc}.
As we all know, L\'evy process is one of the important stochastic processes with stationary and independent increments and it is thought to be an efficient model to approximate the non-Gaussian process \cite{Applebaum.2009Lpasc}.  Let $\mathbf{L}(t)$ be a L\'evy process with Fourier exponent $\eta(\mathbf{w})$ and take
\begin{equation*}
	\eta(\mathbf{w})=-\mathbf{w}^{T}\mathbf{w}+\int_{\mathbb{R}^{n}\backslash\{0\}}(e^{\mathbf{i}\mathbf{w}^{T}\mathbf{y}}-1-\mathbf{i}\mathbf{w}^{T}\mathbf{y})\nu(d\mathbf{y})
\end{equation*}
with $\mathbf{w}^{T}$ being the transpose of $\mathbf{w}$ and $\nu$  a sigma-finite L\'evy measure on $\mathbb{R}^{n}$, i.e.,
\begin{equation*}
	\nu(d\mathbf{y})=c_{n,s}|\mathbf{y}|^{-1-2s}d\mathbf{y}
\end{equation*}
and $c_{n,s}=\frac{2^{2s}s\Gamma(n/2+s)}{\pi^{n/2}\Gamma(1-s)}$.
Further let $\mathbf{T}(t)$ be a strictly increasing subordinator independent of $\mathbf{L}(t)$ with Laplace exponent $\varPsi(z)=z^{\alpha}$, where $\alpha\in(0,1)$. Define the inverse subordinator $\mathbf{E}(t)=\inf\{r>0: \mathbf{T}(r)>t\}$, which has the probability density function in Laplace space as 
\begin{equation*}
	\tilde{p}_{\mathbf{E}}(r,z)=z^{\alpha-1}e^{-rz^{\alpha}}.
\end{equation*}
 Denote $p_{\mathbf{X}}(x,t)$ as the probability density function  of the stochastic process $\mathbf{X}(t)=\mathbf{L}(\mathbf{E}(t))$.
Introduce $p_{\mathbf{L}}(x,t)$ as the probability density function of $\mathbf{L}(t)$. Using  the following formula
\begin{equation*}
	p_{\mathbf{X}}(x,t)=\int_{0}^{\infty}p_{\mathbf{L}}(x,r)p_{\mathbf{E}}(r,t)dr,
\end{equation*}
we can get the Fourier-Laplace transforms of $p_{\mathbf{X}}(x,t)$, i.e.
\begin{equation*}
	\begin{aligned}
		\mathcal{F}(\tilde{p}_{\mathbf{X}}(x,t))(\mathbf{w},z)=&\int_{0}^{\infty}e^{-zt}\int_{\mathbb{R}^{n}}e^{i\mathbf{w}^{T}\mathbf{L}(\mathbf{E}(t))}p_{\mathbf{X}}(\mathbf{L}(\mathbf{E}(t)),t)d\mathbf{L}(\mathbf{E}(t))dt\\
		= &\int_{0}^{\infty}e^{-zt}\int_{0}^{\infty}\int_{\mathbb{R}^{n}}e^{i\mathbf{w}^{T}\mathbf{L}(\mathbf{E}(t))}p_{\mathbf{L}}(\mathbf{L}(\mathbf{E}(t)),\mathbf{E}(t))d\mathbf{L}(\mathbf{E}(t))p_{\mathbf{E}}(\mathbf{E}(t),t)d\mathbf{E}(t)dt\\
		=&\int_{0}^{\infty}e^{-zt}\int_{0}^{\infty}e^{\mathbf{E}(t)\eta(\mathbf{w})}p_{\mathbf{E}}(\mathbf{E}(t),t)d\mathbf{E}(t)dt\\
		=&\int_{0}^{\infty}e^{\mathbf{E}(t)\eta(\mathbf{w})}z^{\alpha-1}e^{-\mathbf{E}(t)z^{\alpha}}d\mathbf{E}(t)\\
		=&\frac{z^{\alpha-1}}{z^{\alpha}-\eta(\mathbf{w})},
	\end{aligned}
\end{equation*}
where $\mathcal{F}$ means Fourier transform and `$~\tilde{}~$' denotes Laplace transform.
Thus $p_{\mathbf{X}}(x,t)$ satisfies
\begin{equation}\label{eqgen}
	{}_{0}\partial_{t}^{\alpha}(p_{\mathbf{X}}(x,t)-p_{\mathbf{X}}(x,0))+(-\Delta)p_{\mathbf{X}}(x,t)+(-\Delta)^{s}p_{\mathbf{X}}(x,t)=0.
\end{equation}

Fractional Fokker-Planck equations have attracted many attentions in recent years and the corresponding numerical schemes are extensively proposed \cite{Acosta.2019Feaffep,Acosta.2017AFLERoSaFEA,Bonito.2019NaotifL,Bonito.2016Naofporao,Jin.2015AaotLsftsewnd,Jin.2019NmftfeewndAco,Lin.2007Fdsafttfde,Nie.202116NafstsfdedbfGn,Nie.2020NaftstfFPswtis,Yan.2018AAotMLSfTFPDEwND}. But these numerical methods are mainly constructed for the fractional Fokker-Planck equation with one diffusion operator. To the best of our knowledge, the relative numerical discussions are few for this kind of equation involving two-scale diffusion. In this paper, we discuss the numerical method for Eq. \eqref{eqretosol}. To be specific, we first provide a sharp regularity estimate for Eq. \eqref{eqretosol}, in which we treat $(-\Delta)^{s}u$ as a ``source term'' to overcome the difficulties caused by diffusion operators with different scales in regularity analysis; with the help of the elliptic regularity of $(-\Delta)$ and the equivalence of different fractional Sobolev norms, we show that $u\in \hat{H}^{2\min(1,\frac{7}{4}-s-\epsilon)}(\Omega)$ when $u_{0},~f(0)\in L^{2}(\Omega)$ and $\int_{0}^{t}\|f'(r)\|_{L^{2}(\Omega)}dr<\infty$; the detailed proofs can refer to Theorem \ref{Thmreg}. Then the finite element method is used to discretize spatial operator and an auxiliary equation (see \eqref{eqhelpsemi0}) is introduced to help us derive the spatial error estimates. At the same time, the $L_{1}$ scheme \cite{Lin.2007Fdsafttfde} is used to approximate the temporal derivative and an $\mathcal{O}(\tau)$ convergence is obtained.

The rest of the paper is organized as follows. In Section 2, we give some notations and function spaces. The fully discrete scheme is built based on $L_{1}$ discretization in time and finite element methods in space in Section 3. In Section 4,  we first provide the regularity estimate for the solution, and then present the complete error estimates for spatial semi-discrete scheme and the fully discrete scheme, respectively. Some numerical examples, in Section 5, are proposed to support the theory. We conclude the paper with some discussions in the last section. Throughout the paper, $C$ is a generic positive constant, whose value may differ at different places and $\epsilon>0$ is arbitrarily small.

\section{Preliminaries}
\subsection{Notations}
Let $A=(-\Delta)$ and $\mathcal{A}^{s}=(-\Delta)^{s}$ with zero Dirichlet boundary conditions and $\{(\lambda_{i},\phi_{i})\}_{i=1}^{\infty}$ be the eigenvalues and eigenfunctions of $A$, where $\{\phi_{i}\}_{i=1}^{\infty}$ are orthonormal bases in $L^{2}(\Omega)$ and $\lambda_{i}\leq \lambda_{i+1}$ for $i\geq 1$. Introduce $A^{\sigma}$ for $\sigma\in\mathbb{R}$ as
\begin{equation*}
	A^{\sigma}u=\sum_{i=1}^{\infty}\lambda_{i}^{\sigma}(u,\phi_{i})\phi_{i}.
\end{equation*}
Let $\hat{H}^{2\sigma}(\Omega)=\mathcal{D}(A^{\sigma})$ equipped with
$\|u\|_{\hat{H}^{2\sigma}(\Omega)}=\|A^{\sigma}u\|_{L^{2}(\Omega)}$ \cite{Thomee.2006Gfemfpp}, where $\mathcal{D}(A^{\sigma})$ means the domain of $A^{\sigma}$. It is easy to verify that $\hat{H}^{0}(\Omega)=L^{2}(\Omega)$, $\hat{H}^{1}(\Omega)=H^{1}_{0}(\Omega)$, and $\hat{H}^{2}(\Omega)=H^{1}_{0}(\Omega)\cap H^{2}(\Omega)$. In the following, we denote  $\|\cdot\|$ as the operator norm from $L^2(\Omega)$ to $L^2(\Omega)$ and the notation `~$\tilde{}$~' as Laplace transform.

 For $\kappa>0$ and $\pi/2<\theta<\pi$, the definitions of sectors $\Sigma_{\theta}$ and $\Sigma_{\theta,\kappa}$ in the complex plane $\mathbb{C}$ are
\begin{equation*}
	\begin{aligned}
		&\Sigma_{\theta}=\{z\in\mathbb{C}\setminus \{0\}:|\arg z|\leq \theta\}, \\
		&\Sigma_{\theta,\kappa}=\{z\in\mathbb{C}:|z|\geq\kappa,|\arg z|\leq \theta\},\\
	\end{aligned}
\end{equation*}
and the contour $\Gamma_{\theta,\kappa}$ is defined by
\begin{equation*}
	\Gamma_{\theta,\kappa}=\{z\in\mathbb{C}: |z|=\kappa,|\arg z|\leq \theta\}\cup\{z\in\mathbb{C}: z=r e^{\pm \mathbf{i}\theta}: r\geq \kappa\},
\end{equation*}
oriented with an increasing imaginary part, where  $\mathbf{i}^2=-1$.
\subsection{Function spaces}

Following \cite{Acosta.2019Feaffep,Acosta.2017AFLERoSaFEA,Bonito.2019NaotifL,Bonito.2016Naofporao,DiNezza.2012HgttfSs}, for $s\in (0,1)$ and $\Omega\subset \mathbb{R}^{n}$, the fractional Sobolev space can be defined by
\begin{equation*}
	H^{s}(\Omega)=\left \{w\in L^{2}(\Omega):~|w|_{H^{s}(\Omega)}^{2}=\int_{\Omega}\int_{\Omega}\frac{|w(x)-w(y)|^{2}}{|x-y|^{n+2s}}dxdy<\infty\right \}
\end{equation*}
and the corresponding norm is $\|\cdot\|_{H^{s}(\Omega)}=\|\cdot\|_{L^{2}(\Omega)}+|\cdot|_{H^{s}(\Omega)}$; for $s>1$ and $s\notin\mathbb{N}$, we define the fractional Sobolev space $H^{s}(\Omega)$ as
\begin{equation*}
	H^{s}(\Omega)=\left \{w\in H^{\lfloor s\rfloor}(\Omega):~|D^{\alpha}w|_{H^{\sigma}(\Omega)}<\infty~{\rm for~all~}\alpha,~{\rm s.t.}~|\alpha|=\lfloor s\rfloor\right \}
\end{equation*}
with $\sigma=s-\lfloor s\rfloor$ and $\lfloor s\rfloor$ being the biggest integer not larger than $s$.

 On the other hand, a crucial subspace of $H^{s}(\mathbb{R}^{n})$ with $s\in(0,2)$ is defined by
\begin{equation*}
	H^{s}_{0}(\Omega)=\left \{w\in H^{s}(\mathbb{R}^{n}): {\bf supp}~w\subset \bar{\Omega}\right \}
\end{equation*}
and its norm  $\|\cdot\|_{H^{s}_{0}(\Omega)}=\|\cdot\|_{H^{s}(\mathbb{R}^{n})}$; in particular, for $s\in(0,1)$, its norm can also be defined by
\begin{equation*}
	\|u\|_{H^{s}_{0}(\Omega)}=\|u\|_{L^{2}(\Omega)}+\langle u,u\rangle_{s},
\end{equation*}
where
\begin{equation*}
	\langle u,v\rangle_{s}=\frac{c_{n,s}}{2}\int\int_{\mathbb{R}^{n}\times \mathbb{R}^{n}\backslash(\Omega^{c}\times\Omega^{c})}\frac{(u(x)-u(y))(v(x)-v(y))}{|x-y|^{n+2s}}dxdy.
\end{equation*}
Moreover, denote $H^{-s}(\Omega)$ as the dual space of $H^{s}_{0}(\Omega)$ with $s>0$.
\begin{remark}\label{Reeqspace}
	According to \cite{Bonito.2019NaotifL}, when $s\in[0,\frac{3}{2})$ and $\Omega$ is a Lipschitz domain, it holds $\hat{H}^{s}(\Omega)=H^{s}_{0}(\Omega)$;  when $s\in[0,\frac{1}{2})$, there holds $H^{s}(\Omega)=H^{s}_{0}(\Omega)$. Thus, when $s\in(-\frac{3}{2},0]$ and $\Omega$ is a Lipschitz domain,  we have $\hat{H}^{s}(\Omega)=H^{s}(\Omega)$.
\end{remark}

\section{Numerical discretizations}
In this section, we develop the fully-discrete scheme for \eqref{eqretosol} based on $L_{1}$ discretization in time and finite element approximation in space. Denote $\mathcal{T}_{h}$ as a shape regular quasi-unform partition of the domain $\Omega$ with mesh size $h$ and $X_{h}$ as the space of continuous piecewise linear  functions on $\mathcal{T}_{h}$. Let $(\cdot,\cdot)$ be the $L^{2}$ inner product and $P_{h}:~L^{2}(\Omega)\rightarrow X_{h}$ the $L^{2}$ projection defined by
\begin{equation*}
	(P_{h}u,v_{h})=(u,v_{h})\quad \forall v_{h}\in X_{h}.
\end{equation*}
Introduce $R_{h}^{s}:~H^{s}_{0}(\Omega)\rightarrow X_{h}$ with $s\in(0,1)$ satisfying
\begin{equation*}
	\langle u,v_{h}\rangle_{s}=\langle R_{h}^{s}u,v_{h}\rangle_{s}\quad \forall v_{h}\in X_{h}.
\end{equation*}
Using the finite element approximation for the operators $A$ and $\mathcal{A}^{s}$ in Eq. \eqref{eqretosol},  then the semi-discrete scheme can be written as: find $u_{h}\in X_{h}$ satisfying
\begin{equation}\label{eqsemi0}
	({}_{0}\partial^{\alpha}_{t}(u_{h}-u_{0,h}),v_{h})+(\nabla u_{h},\nabla v_{h})+\langle u_{h},v_{h}\rangle_{s}=(f,v_{h})\quad \forall v_{h}\in X_{h},
\end{equation}
where $u_{0,h}=P_{h}u_{0}$.

Introduce two discrete operators $A_{h}$ and $\mathcal{A}^{s}_{h}$ as
\begin{equation*}
	(A_{h}u_{h},v_{h})=(\nabla u_{h},\nabla v_{h}),\quad (\mathcal{A}^{s}_{h}u_{h},v_{h})=\langle u_{h}, v_{h}\rangle_{s} \quad \forall u_{h},v_{h}\in X_{h}.
\end{equation*}
It is easy to verify that \cite{Nie.202116NafstsfdedbfGn}
\begin{equation}\label{eqARPA}
	\mathcal{A}^{s}_{h}R^{s}_{h}=P_{h}\mathcal{A}^{s}.
\end{equation}
Then we can rewrite Eq. \eqref{eqsemi0} as
\begin{equation}\label{eqsemi}
	\left\{\begin{aligned}
		&{}_{0}\partial^{\alpha}_{t}(u_{h}-u_{0,h})+A_{h}u_{h}+\mathcal{A}^{s}_{h}u_{h}=f_{h},\quad (x,t)\in\Omega\times(0,T],\\
		&u_{0,h}=P_{h}u_{0},\qquad x\in\Omega,\\
	\end{aligned}\right .
\end{equation}
where $f_{h}=P_{h}f$.

Next, we use  $L_{1}$ scheme introduced in \cite{Lin.2007Fdsafttfde} to discretize the temporal derivative.
Introduce $b^{(\alpha)}_{j}$ as
\begin{equation*}
	b^{(\alpha)}_{j}=((j+1)^{1-\alpha}-j^{1-\alpha})/\Gamma(2-\alpha),\qquad j=0,1,\cdots,n-1,
\end{equation*}
and ${}_{0}\partial^{\alpha}_{t}(u(t_{n})-u_{0})$ with $\alpha\in(0,1)$ can be approximated by
\begin{equation*}
	{}_{0}\partial^{\alpha}_{t} (u(t_{n})-u_{0})\approx\frac{1}{\tau^{\alpha}}\left(b^{(\alpha)}_{0}\left (u(t_{n})-u_{0}\right )+\sum_{j=1}^{n-1}(b^{(\alpha)}_{j}-b^{(\alpha)}_{j-1})\left(u(t_{n-j})-u_{0}\right)\right).
\end{equation*}
Let
\begin{equation}\label{eqdefdja}
	d^{(\alpha)}_{j}=\tau^{-\alpha}\left\{
	\begin{aligned}
		&b^{(\alpha)}_{0},\qquad j=0,\\
		&b^{(\alpha)}_{j}-b^{(\alpha)}_{j-1},\qquad j>0.
	\end{aligned}\right.
\end{equation}
Then we have
\begin{equation*}
	{}_{0}\partial^{\alpha}_{t} (u(t_{n})-u_{0})\approx\sum_{j=0}^{n-1}d^{(\alpha)}_{j}\left(u(t_{n-j})-u_{0}\right).
\end{equation*}
So, the fully discrete scheme of Eq. \eqref{eqretosol} can be written as
\begin{equation}\label{equfullsche0}
	\left\{
	\begin{aligned}
		&\sum_{j=0}^{n-1}d^{(\alpha)}_{j}\left(u^{n-j}_{h}-u^{0}_{h}\right)+A_{h}u^{n}_{h}+\mathcal{A}_{h}^{s}u^{n}_{h}=f^{n}_{h},\\
		&u^{0}_{h}=u_{0,h},
	\end{aligned}
	\right.
\end{equation}
where $f^{n}_{h}=f_{h}(t_{n})$.
\section{Error analyses}
Here we first provide the regularity of the solution of Eq. \eqref{eqretosol}, and then we develop the error estimates of the spatial semi-discrete scheme and fully-discrete scheme, respectively.

Below we recall the Gr{\"o}nwall inequality.
\begin{lemma}[\cite{Elliott.1992EEwSaNDfaFEMftCHE,Nie.2020NaftstfFPswtis}]\label{lemgrondwall}
	Let the function $\phi(t)\geq 0$ be continuous for $0< t\leq T$. If
	\begin{equation*}
		\phi(t)\leq \sum_{k=1}^Na_kt^{-1+\alpha_k}+b\int_0^t(t-r)^{-1+\beta}\phi(r)dr,\quad 0<t\leq T,
	\end{equation*}
	for some positive constants $\{a_k\}_{k=1}^N$, $\{\alpha_k\}_{k=1}^N$, $b$, and  $\beta$, then there exists a positive constant {$C=C(b,T,\{\alpha_k\}_{k=1}^N,\beta)$ }such that
	\begin{equation*}
		\phi(t)\leq C\sum_{k=1}^Na_kt^{-1+\alpha_k} ~~{\rm for } ~~  0<t\leq T.
	\end{equation*}
\end{lemma}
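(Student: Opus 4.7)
The plan is to prove the inequality by iterating the linear Picard-type operator
\[ T\phi(t) := b\int_0^t (t-r)^{\beta-1}\phi(r)\,dr \]
and passing to the limit. Writing $\Phi(t) := \sum_{k=1}^N a_k t^{\alpha_k - 1}$, the hypothesis simply reads $\phi \leq \Phi + T\phi$ on $(0,T]$. Substituting this bound into its own right-hand side repeatedly gives, for every $n \geq 1$,
\[ \phi(t) \leq \sum_{j=0}^{n-1} T^j \Phi(t) + T^n \phi(t), \]
so the proof reduces to summing the series $\sum_j T^j\Phi$ and showing that the remainder $T^n\phi(t)$ tends to $0$ as $n\to\infty$.

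The first step is to evaluate $T^j$ on the source terms. An induction on $j$ based on the Beta identity $\int_0^t (t-r)^{\beta-1} r^{\alpha-1}\,dr = B(\beta,\alpha)\, t^{\alpha+\beta-1}$ yields
\[ T^j\bigl(r^{\alpha_k - 1}\bigr)(t) = b^j\, \frac{\Gamma(\alpha_k)\Gamma(\beta)^j}{\Gamma(\alpha_k + j\beta)}\, t^{\alpha_k + j\beta - 1} \leq \frac{\Gamma(\alpha_k)\bigl(b\,\Gamma(\beta)\,T^\beta\bigr)^j}{\Gamma(\alpha_k + j\beta)}\, t^{\alpha_k - 1}. \]
Summing over $j$ produces a Mittag-Leffler-type series whose terms decay because $\Gamma(\alpha_k + j\beta)$ grows super-exponentially; hence the series converges, and its sum, multiplied against $\sum_k a_k t^{\alpha_k-1}$, supplies the constant $C = C(b,T,\{\alpha_k\}_{k=1}^N,\beta)$ claimed in the statement.

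The second step is to show that the remainder $T^n\phi(t)$ vanishes. Folding the convolution kernel $n$ times leaves a prefactor proportional to $b^n\,\Gamma(\beta)^n/\Gamma(n\beta)$ acting on $\int_0^t (t-r)^{n\beta-1}\phi(r)\,dr$, and as long as $\phi$ is locally integrable on $[0,T]$ the factor $1/\Gamma(n\beta)$ drives $T^n\phi(t)\to 0$ uniformly on each $[\delta,T]$. I would secure the required local integrability by a short bootstrap from the hypothesis itself: since each $\alpha_k>0$, $\Phi$ is integrable near $0$, and a single application of $\phi \leq \Phi + T\phi$ forces $\phi$ to inherit the same integrable singular behaviour at the origin.

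The main obstacle I anticipate is precisely this integrability check near $t=0$, since the statement only assumes continuity on $(0,T]$ and $\phi$ may be genuinely singular there; one must invoke the inequality itself, together with the positivity of the $\alpha_k$, to guarantee that every iterated integral is meaningful and that the interchange of sum and iteration is legitimate. Once that is settled, the rest is a routine book-keeping exercise combining the Beta-integral induction with the super-exponential decay of $1/\Gamma(n\beta)$.
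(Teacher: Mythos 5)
The paper does not prove this lemma; it is quoted verbatim from the cited references (Elliott--Larsson and Nie et al.), where the proof is exactly the Picard-iteration argument you outline. Your proposal is therefore the standard proof and is essentially correct: the monotonicity of the positive-kernel operator $T$ justifies the iteration $\phi\leq\sum_{j<n}T^j\Phi+T^n\phi$, the Beta-function induction gives the Mittag--Leffler-type majorant whose sum furnishes $C(b,T,\{\alpha_k\},\beta)$, and the factor $1/\Gamma(n\beta)$ kills the remainder. The one point worth tightening is the integrability "bootstrap'': as you note, continuity on $(0,T]$ alone does not prevent a non-integrable singularity at $t=0$, and a single application of $\phi\leq\Phi+T\phi$ cannot create integrability out of nothing --- if $\int_0^t(t-r)^{\beta-1}\phi(r)\,dr=+\infty$ the hypothesis is vacuous and the conclusion can fail. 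The clean fix is to observe that the hypothesis is only meaningful when its right-hand side is finite, and finiteness of $\int_0^t(t-r)^{\beta-1}\phi(r)\,dr$ for some $t>0$ already forces $\phi\in L^1(0,t/2)$ because the kernel is bounded below there; with that reading (or with local integrability added as a standing assumption, as in the cited sources) every step of your argument goes through.
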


\subsection{Regularity of the solution}
Taking Laplace transform for \eqref{eqretosol}, one has
\begin{equation*}
	z^{\alpha}\tilde{u}+A\tilde{u}+\mathcal{A}^{s}\tilde{u}=z^{\alpha-1}u_{0}+\tilde{f},
\end{equation*}
resulting in
\begin{equation}\label{eqsolrep0}
	\begin{aligned}
		\tilde{u}=&-\tilde{E}(z)\mathcal{A}^{s}\tilde{u}+\tilde{E}(z)z^{\alpha-1}u_{0}+\tilde{E}(z)\tilde{f},
	\end{aligned}
\end{equation}
where
\begin{equation*}
	\tilde{E}(z)=(z^{\alpha}+A)^{-1}.
\end{equation*}
\begin{theorem}\label{Thmreg}
	Let $u$ be the solution of \eqref{eqretosol}. Assuming $u_{0}\in L^{2}(\Omega)$, $f(0)\in L^{2}(\Omega)$, and $\int_{0}^{t}\|f'(r)\|_{L^{2}(\Omega)}dr<\infty$, we have
	\begin{equation*}
		\|A^{\sigma}u\|_{L^{2}(\Omega)}\leq Ct^{-\sigma\alpha}\|u_{0}\|_{L^{2}(\Omega)}+C\|f(0)\|_{L^{2}(\Omega)}+\int_{0}^{t}\|f'(r)\|_{L^{2}(\Omega)}dr,
	\end{equation*}
	where $\sigma\in(\max(-\epsilon,s-\frac{3}{4}),\min(1,\frac{7}{4}-s-\epsilon)]$.
\end{theorem}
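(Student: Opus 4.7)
The plan is to work in Laplace space using the representation \eqref{eqsolrep0} and then invert along the contour $\Gamma_{\theta,\kappa}$ (with $\kappa=1/t$):
\begin{equation*}
A^{\sigma}u(t) = \frac{1}{2\pi\mathbf{i}}\int_{\Gamma_{\theta,\kappa}} e^{zt}\,A^{\sigma}\tilde{u}(z)\,dz.
\end{equation*}
By \eqref{eqsolrep0}, $A^{\sigma}\tilde{u}(z)$ splits into three contributions---one from $u_{0}$, one from $\tilde{f}$, and one from the ``source term'' $-\tilde{E}(z)\mathcal{A}^{s}\tilde{u}$---which I would estimate separately.

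For the $u_{0}$ and $\tilde{f}$ pieces, the spectral decomposition of $A$ yields the resolvent bound $\|A^{\sigma}\tilde{E}(z)\|\leq C|z|^{-\alpha(1-\sigma)}$ for $\sigma\in[0,1]$. Combined with the identity $\tilde{f}(z) = z^{-1}f(0) + z^{-1}\widetilde{f'}(z)$ (integration by parts in the Laplace variable), standard contour manipulations on $\Gamma_{\theta,\kappa}$ produce exactly the bounds $Ct^{-\sigma\alpha}\|u_{0}\|_{L^{2}(\Omega)}$, $C\|f(0)\|_{L^{2}(\Omega)}$, and $\int_{0}^{t}\|f'(r)\|_{L^{2}(\Omega)}\,dr$ appearing in the theorem.

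The delicate piece is the fractional-Laplacian contribution $-A^{\sigma}\tilde{E}(z)\mathcal{A}^{s}\tilde{u}(z)$. As anticipated in the introduction, I would treat $\mathcal{A}^{s}\tilde{u}$ as data and convert operator norms involving $\mathcal{A}^{s}$ into ones involving $A^{s}$ via the space identifications of Remark \ref{Reeqspace}; for $\sigma$ in the claimed range, commuting through $\tilde{E}(z)$ and applying the spectral resolvent bound in a shifted index produce an estimate of the form
\begin{equation*}
\|A^{\sigma}\tilde{E}(z)\mathcal{A}^{s}\tilde{u}(z)\|_{L^{2}(\Omega)} \leq C|z|^{-\alpha\beta}\,\|A^{\sigma}\tilde{u}(z)\|_{L^{2}(\Omega)}
\end{equation*}
for some $\beta=\beta(\sigma,s,\epsilon)>0$. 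Absorbing this term on the left of \eqref{eqsolrep0} then closes a resolvent estimate for $A^{\sigma}\tilde{u}(z)$; Laplace inversion followed by the generalized Gr\"onwall inequality (Lemma \ref{lemgrondwall}) produces the stated time-dependent bound.

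The main obstacle is precisely the fractional-Laplacian step: the admissible range $\sigma\in(\max(-\epsilon,s-3/4),\min(1,7/4-s-\epsilon)]$ is dictated by how broadly Remark \ref{Reeqspace} can be invoked. The lower bound $\sigma>s-3/4$ is required so that the negative-index identification $\hat{H}^{2(\sigma-s)}(\Omega)=H^{2(\sigma-s)}(\Omega)$ (or its dual) remains valid when one passes $\mathcal{A}^{s}$ across the resolvent, while the upper bound $\sigma\leq 7/4-s-\epsilon$ is needed so that $A^{\sigma+s}$ acts on a space on which the spectral power and the integral fractional Laplacian are comparable. Establishing the sharp positive exponent $\beta$ in the key inequality above, and checking that the resulting bootstrap closes uniformly in $z\in\Gamma_{\theta,\kappa}$, is the technical heart of the argument.
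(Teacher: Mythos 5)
Your proposal follows the paper's proof in all essentials: the same Laplace representation \eqref{eqsolrep0}, the same contour inversion with the splitting $\tilde f=z^{-1}f(0)+z^{-1}\widetilde{f'}$, the same resolvent bound $\|A^{\sigma-\gamma}\tilde E(z)\|\le C|z|^{-(1-\sigma+\gamma)\alpha}$, and the same use of Remark \ref{Reeqspace} to trade $\mathcal{A}^{s}$ for a spectral power of $A$; your reading of where the two endpoints of the admissible range for $\sigma$ come from also matches the paper --- they are exactly the conditions for an auxiliary index $\gamma\in(\max(\sigma-1,-\tfrac34),\min(\tfrac34-s,\sigma-s,\tfrac14))$ to exist. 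The one place you deviate is the closing mechanism. The paper does \emph{not} absorb $-\tilde E(z)\mathcal{A}^{s}\tilde u$ on the left in Laplace space; it keeps that term as a time convolution, bounds $\|A^{\gamma}\mathcal{A}^{s}u(r)\|_{L^{2}(\Omega)}\le C\|u(r)\|_{\hat H^{2(s+\gamma)}(\Omega)}$, and closes with the weakly singular Gr\"onwall inequality (Lemma \ref{lemgrondwall}) in the time domain. Your absorption step, as stated, does not quite go through: it needs $C|z|^{-\alpha\beta}<1$ uniformly on $\Gamma_{\theta,\kappa}$, which fails on the arc $|z|=\kappa$ when $\kappa=1/t$ and $t$ is of order $T$; one would have to enlarge $\kappa$ to a fixed large constant (harmless for $t\le T$, at the cost of a factor $e^{\kappa T}$), and the subsequent appeal to Gr\"onwall would then be redundant. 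Since your final step is precisely the Gr\"onwall closure, the argument you would actually carry out coincides with the paper's; the Laplace-space absorption, if completed carefully, would buy a genuine resolvent estimate for $A^{\sigma}(z^{\alpha}+A+\mathcal{A}^{s})^{-1}$ that the paper never establishes, while the paper's time-domain route avoids any largeness condition on $\kappa$.
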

\begin{proof}
	Using \eqref{eqsolrep0} and $f(t)=f(0)+\int_{0}^{t}f'(r)dr$, we split $\|A^{\sigma}u\|_{L^{2}(\Omega)}$ into four parts, i.e.,
	\begin{equation*}
		\begin{aligned}
			&\|A^{\sigma}u\|_{L^{2}(\Omega)}\\
			\leq& C\left \|\int_{\Gamma_{\theta,\kappa}}e^{zt}A^{\sigma}\tilde{E}(z)\mathcal{A}^{s}\tilde{u}dz\right \|_{L^{2}(\Omega)}+C\left \|\int_{\Gamma_{\theta,\kappa}}e^{zt}A^{\sigma}\tilde{E}(z)z^{\alpha-1}u_{0}dz\right \|_{L^{2}(\Omega)}\\
			&+C\left \|\int_{\Gamma_{\theta,\kappa}}e^{zt}A^{\sigma}\tilde{E}(z)z^{-1}f(0)dz\right \|_{L^{2}(\Omega)}+C\left \|\int_{0}^{t}\int_{\Gamma_{\theta,\kappa}}e^{z(t-r)}A^{\sigma}\tilde{E}(z)z^{-1}dzf'(r)dr\right \|_{L^{2}(\Omega)}.
		\end{aligned}
	\end{equation*}
	According to Remark \ref{Reeqspace} and the resolvent estimate $\|(z^{\alpha}+A)^{-1}\|\leq C|z|^{-\alpha}$ \cite{Jin.2015AaotLsftsewnd,Jin.2019NmftfeewndAco,Lubich.1996Ndeefaoaeewaptmt}, there holds
	\begin{equation*}
		\begin{aligned}		
			\|A^{\sigma}u\|_{L^{2}(\Omega)}\leq& C \int_{0}^{t}\int_{\Gamma_{\theta,\kappa}}|e^{z(t-r)}|\|A^{\sigma-\gamma}\tilde{E}(z)\||dz|\|A^{\gamma}\mathcal{A}^{s}u\|_{L^{2}(\Omega)}dr \\
			&+C\int_{\Gamma_{\theta,\kappa}}|e^{zt}||z|^{\sigma \alpha-1}|dz| \|u_{0} \|_{L^{2}(\Omega)}+C\int_{\Gamma_{\theta,\kappa}}|e^{zt}||z|^{(\sigma-1 )\alpha-1}|dz|\|f(0)\|_{L^{2}(\Omega)}\\
			&+C\int_{0}^{t}\int_{\Gamma_{\theta,\kappa}}|e^{z(t-r)}||z|^{(\sigma-1) \alpha-1}|dz|\|f'(r)\|_{L^{2}(\Omega)}dr\\
			\leq& C\int_{0}^{t}(t-r)^{(1-\sigma+\gamma)\alpha-1}\|u(r)\|_{\hat{H}^{2s+2\gamma}(\Omega)}dr\\
			&+Ct^{-\sigma\alpha}\|u_{0}\|_{L^{2}(\Omega)}+Ct^{(1-\sigma) \alpha}\|f(0)\|_{L^{2}(\Omega)}\\
			&+C\int_{0}^{t}(t-r)^{(1-\sigma) \alpha}\|f'(r)\|_{L^{2}(\Omega)}dr,
		\end{aligned}
	\end{equation*}
	where $\gamma\in(\max(\sigma-1,-\frac{3}{4}),\min(\frac{3}{4}-s,\sigma-s,\frac{1}{4}))$ and $\sigma\in(\max(-\epsilon,s-\frac{3}{4}),\min(1,\frac{7}{4}-s-\epsilon)]$. Combining Lemma \ref{lemgrondwall}, the desired results are obtained.
\end{proof}

\subsection{Spatial error estimate}
To get the spatial error estimate,  we first provide the expression of $u_{h}$ from \eqref{eqsemi}.  By Laplace transform, there exists 
\begin{equation}\label{eqsemisolLp0}
	\tilde{u}_{h}=-\tilde{E}_{h}(z)\mathcal{A}^{s}_{h}\tilde{u}_{h}+z^{\alpha-1}\tilde{E}_{h}(z)P_{h}u_{0}+\tilde{E}_{h}(z)P_{h}\tilde{f},
\end{equation}
where
\begin{equation*}
	\tilde{E}_{h}(z)=(z^{\alpha}+A_{h})^{-1}.
\end{equation*}
First, we provide some useful lemmas.
\begin{lemma}[\cite{Bazhlekova.2015AaotRSpfagsgf,Thomee.2006Gfemfpp}]\label{lemeroper0}
	Let $v\in L^2(\Omega)$ and $z\in\Sigma_{\theta,\kappa}$. Denote $w=(z^{\alpha}+A)^{-1}v$ and $w_h=(z^{\alpha}+A_h)^{-1} P_hv$. Then one has
	\begin{equation*}
		\|w-w_h\|_{L^2(\Omega)}+h\|w-w_h\|_{\hat{H}^1(\Omega)}\leq Ch^{2}\|v\|_{L^2(\Omega)}.
	\end{equation*}
\end{lemma}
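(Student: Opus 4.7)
The plan is to carry out the standard finite-element analysis of a resolvent problem, adapted to the complex parameter $z\in\Sigma_{\theta,\kappa}$. First I would write the weak formulations: $w$ satisfies $z^{\alpha}(w,\phi)+(\nabla w,\nabla\phi)=(v,\phi)$ for all $\phi\in H_{0}^{1}(\Omega)$, while $w_{h}$ satisfies the analogous identity restricted to $X_{h}$ with right-hand side $(v,v_{h})$ (since $(P_{h}v,v_{h})=(v,v_{h})$). Subtracting yields the Galerkin orthogonality
\begin{equation*}
z^{\alpha}(w-w_{h},v_{h})+(\nabla(w-w_{h}),\nabla v_{h})=0,\qquad v_{h}\in X_{h}.
\end{equation*}
I would then introduce the elliptic Ritz projection $\Pi_{h}:H_{0}^{1}(\Omega)\to X_{h}$ associated with $A=-\Delta$, and split $w-w_{h}=\eta+\chi_{h}$ with $\eta=w-\Pi_{h}w$ and $\chi_{h}=\Pi_{h}w-w_{h}\in X_{h}$.

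Next, I would establish the $z$-uniform elliptic regularity $\|w\|_{\hat{H}^{2}(\Omega)}\le C\|v\|_{L^{2}(\Omega)}$: applying $A$ to $(z^{\alpha}+A)w=v$ gives $\|Aw\|_{L^{2}}\le\|v\|_{L^{2}}+|z|^{\alpha}\|w\|_{L^{2}}$, and the resolvent bound $\|(z^{\alpha}+A)^{-1}\|\le C|z|^{-\alpha}$ (recalled right after \eqref{eqsolrep0} in the paper) controls the second term. With this, the classical Ritz estimates give $\|\eta\|_{L^{2}(\Omega)}+h\|\eta\|_{\hat{H}^{1}(\Omega)}\le Ch^{2}\|w\|_{\hat{H}^{2}(\Omega)}\le Ch^{2}\|v\|_{L^{2}(\Omega)}$, which already yields the desired $L^{2}$- and $\hat{H}^{1}$-contributions from the projection part.

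For $\chi_{h}$, substituting the splitting into the Galerkin orthogonality and using the Ritz identity $(\nabla\eta,\nabla v_{h})=0$ leaves
\begin{equation*}
z^{\alpha}(\chi_{h},v_{h})+(\nabla\chi_{h},\nabla v_{h})=-z^{\alpha}(\eta,v_{h}),\qquad v_{h}\in X_{h}.
\end{equation*}
Testing with $v_{h}=\overline{\chi_{h}}$, multiplying through by $e^{-\mathbf{i}\arg z^{\alpha}}$ and taking real parts (a sectorial rotation, valid for $\theta$ chosen so that $\alpha\theta$ stays below $\pi$), I would deduce $|z|^{\alpha}\|\chi_{h}\|_{L^{2}}^{2}+c\|\nabla\chi_{h}\|_{L^{2}}^{2}\le|z|^{\alpha}\|\eta\|_{L^{2}}\|\chi_{h}\|_{L^{2}}$. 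This gives $\|\chi_{h}\|_{L^{2}}\le C\|\eta\|_{L^{2}}\le Ch^{2}\|v\|_{L^{2}}$, which together with the $\eta$-bound closes the $L^{2}$ estimate.

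Finally, for the $\hat{H}^{1}$ part of $\chi_{h}$ the energy bound only yields $\|\nabla\chi_{h}\|_{L^{2}}\le C|z|^{\alpha/2}\|\eta\|_{L^{2}}$, whose $|z|^{\alpha/2}$ factor is not uniform in $z$. I would sidestep this by invoking the inverse inequality on the shape-regular quasi-uniform mesh, $\|\nabla\chi_{h}\|_{L^{2}}\le Ch^{-1}\|\chi_{h}\|_{L^{2}}\le Ch\|v\|_{L^{2}}$, and then applying the triangle inequality with the Ritz estimate for $\nabla\eta$. The main obstacle is precisely this step: the energy argument alone gives a $z$-dependent $\hat{H}^{1}$ bound, so the $z$-uniform $L^{2}$ control must be combined with the inverse inequality to absorb the loss. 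Choosing $\theta\in(\pi/2,\pi)$ with $\alpha\theta<\pi$ (always possible for $\alpha\in(0,1)$) ensures the sectorial rotation step in the energy estimate is legitimate and the constants remain independent of $z\in\Sigma_{\theta,\kappa}$.
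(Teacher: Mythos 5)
Your argument is correct and is essentially the standard one: the paper itself offers no proof, citing Bazhlekova et al.\ and Thom\'ee, and those sources prove the estimate exactly via the Ritz-projection splitting, the $z$-uniform elliptic regularity from the resolvent bound, and a sectorial energy estimate of the type you describe. The only point to polish is the ``rotate and take real parts'' step: when $\alpha\theta>\pi/2$ this literally produces a negative coefficient on $\|\nabla\chi_h\|^2$, so one should instead invoke the standard lower bound $|z^{\alpha}a+b|\geq c(|z|^{\alpha}a+b)$ for $a,b\geq0$ and $|\arg z^{\alpha}|\leq\alpha\theta<\pi$, which yields the same conclusion.
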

Similar to the proofs in \cite{Bazhlekova.2015AaotRSpfagsgf}, we have
\begin{lemma}\label{lemeroper1}
	Let $v\in \hat{H}^{-{\sigma}}(\Omega)$ with ${\sigma}\in[0,1]$ and $z\in\Sigma_{\theta,\kappa}$. Denote $w=(z^{\alpha}+A)^{-1}v$ and $w_h=(z^{\alpha}+A_h)^{-1} P_hv$. Then there holds
	\begin{equation*}
		\|w-w_h\|_{L^2(\Omega)}+h\|w-w_h\|_{\hat{H}^1(\Omega)}\leq Ch^{2-{\sigma}}\|v\|_{\hat{H}^{-{\sigma}}(\Omega)}.
	\end{equation*}
\end{lemma}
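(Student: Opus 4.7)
The plan is to establish the claim at the two endpoints $\sigma = 0$ and $\sigma = 1$, and then obtain the intermediate range $\sigma \in (0,1)$ by interpolation. The $\sigma = 0$ case is exactly Lemma \ref{lemeroper0}, so the main new work is the $\sigma = 1$ endpoint, after which a standard interpolation step closes the argument. Throughout, one must keep constants uniform in $z \in \Sigma_{\theta,\kappa}$ so that the resulting bound can later be integrated along $\Gamma_{\theta,\kappa}$.

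For the $\sigma = 1$ endpoint I would use an Aubin--Nitsche duality argument combined with the sectorial resolvent estimates $\|(z^{\alpha}+A)^{-1}\|\le C|z|^{-\alpha}$ and $\|A(z^{\alpha}+A)^{-1}\|\le C$ (and their discrete analogues for $A_h$). Set $e = w - w_h$, and given an arbitrary test function $\chi \in L^{2}(\Omega)$ introduce the dual quantities $\psi = (z^{\alpha}+A)^{-1}\chi$ and $\psi_{h} = (z^{\alpha}+A_h)^{-1} P_{h}\chi$. By Lemma \ref{lemeroper0}, $\|\psi-\psi_{h}\|_{\hat{H}^{1}(\Omega)}\le Ch\|\chi\|_{L^{2}(\Omega)}$ uniformly in $z$. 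Expanding both the continuous and the discrete equations through the sesquilinear form $z^{\alpha}(\cdot,\cdot) + (\nabla\cdot,\nabla\cdot)$, and using the defining property of $P_{h}$, the Galerkin-type identity
\[
(e,\chi) \;=\; \langle v,\, \psi-\psi_{h}\rangle_{\hat{H}^{-1},\hat{H}^{1}}
\]
drops out, and duality yields $\|e\|_{L^{2}(\Omega)}\le Ch\|v\|_{\hat{H}^{-1}(\Omega)}$. For the gradient part, the energy-type stability $\|w\|_{\hat{H}^{1}(\Omega)},\|w_{h}\|_{\hat{H}^{1}(\Omega)}\le C\|v\|_{\hat{H}^{-1}(\Omega)}$ (uniform in $z$, by $\|A^{1/2}(z^{\alpha}+A)^{-1}A^{1/2}\|\le C$) immediately gives $\|e\|_{\hat{H}^{1}(\Omega)}\le C\|v\|_{\hat{H}^{-1}(\Omega)}$, which matches the required $h^{1-\sigma}=h^{0}$ power.

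With the endpoints in hand, the intermediate range follows by complex interpolation of the linear operator $T_{z} : v \mapsto (z^{\alpha}+A)^{-1}v - (z^{\alpha}+A_{h})^{-1}P_{h}v$, since by the very definition of the fractional-power spaces one has $\hat{H}^{-\sigma}(\Omega) = [L^{2}(\Omega),\hat{H}^{-1}(\Omega)]_{\sigma}$. The main obstacle I anticipate is not any single estimate but rather the bookkeeping required to make every bound in the duality step uniform in $z$ along $\Sigma_{\theta,\kappa}$; this is exactly what the sectorial resolvent bounds are designed to deliver, but it must be verified carefully for the discrete objects $(z^{\alpha}+A_{h})^{-1}$ and $P_{h}$ as well. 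A secondary, but routine, point is that $A_{h}$ shares the same (truncated) interpolation scale as $A$, which holds under the quasi-uniform mesh hypothesis already in force.
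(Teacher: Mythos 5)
Your argument is correct and is essentially the standard one: the paper itself gives no proof of this lemma (it simply states ``Similar to the proofs in \cite{Bazhlekova.2015AaotRSpfagsgf}''), and the proof in that reference is exactly your combination of the known $\sigma=0$ case, an Aubin--Nitsche duality step for the $\sigma=1$ endpoint (your identity $(e,\chi)=\langle v,\psi-\psi_h\rangle$ follows from Galerkin orthogonality for the form $z^{\alpha}(\cdot,\cdot)+(\nabla\cdot,\nabla\cdot)$ exactly as you indicate), and interpolation between $L^{2}(\Omega)$ and $\hat{H}^{-1}(\Omega)$. The only points to tidy up are the ones you already flag: uniformity in $z$ via $|z^{\alpha}+\lambda|\geq c(|z|^{\alpha}+\lambda)$ on $\Sigma_{\theta,\kappa}$, and the conjugation convention in the adjoint problem for complex $z$.
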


By using the boundedness of $\|A^{-s}_{h}P_{h}A^{s}\|$ with $s\in[0,\frac{1}{2}]$ \cite{Jin.2015AaotLsftsewnd,Thomee.2006Gfemfpp} and the stability of $L^{2}$ projection $P_{h}$ \cite{Thomee.2006Gfemfpp}, it holds
\begin{equation*}
	\begin{aligned}
		\|\tilde{E}(z)-\tilde{E}_{h}(z)P_{h}\|\leq\left \{
		\begin{aligned}
			&Ch^{2},\\
			&C|z|^{-\alpha},
		\end{aligned}\right .\\
	\end{aligned}
\end{equation*}
and
\begin{equation*}
	\begin{aligned}
		&\|\tilde{E}(z)-\tilde{E}_{h}(z)P_{h}\|_{L^{2}(\Omega)\rightarrow \hat{H}^{1}(\Omega)}\leq Ch;\\
		&\|\tilde{E}(z)-\tilde{E}_{h}(z)P_{h}\|_{\hat{H}^{-1}(\Omega)\rightarrow L^{2}(\Omega)}\leq\left \{
		\begin{aligned}
			&Ch,\\
			&C|z|^{-\alpha/2}.
		\end{aligned}\right .\\
	\end{aligned}
\end{equation*}

In what follows, introduce $\bar{u}_{h}\in X_{h}$ as the solution of the following auxiliary equation to help to obtain the spatial error estimate, i.e.,
\begin{equation}\label{eqhelpsemi0}
	({}_{0}\partial^{\alpha}_{t}(\bar{u}_{h}-u_{0,h}),v_{h})+(\nabla \bar{u}_{h},\nabla v_{h})+\langle u,v_{h}\rangle_{s}=(f,v_{h})\quad \forall v_{h}\in X_{h},
\end{equation}
where $u$ is the exact solution of \eqref{eqretosol} and $u_{0,h}=P_{h}u_{0}$. Equation \eqref{eqhelpsemi0} can be rewritten as
\begin{equation}\label{eqhelpsemi}
	\left\{\begin{aligned}
		&{}_{0}\partial^{\alpha}_{t}(\bar{u}_{h}-u_{0,h})+A_{h}\bar{u}_{h}+P_{h}\mathcal{A}^{s}u=f_{h}\quad (x,t)\in\Omega\times(0,T],\\
		&u_{0,h}=P_{h}u_{0}\quad x\in\Omega.\\
	\end{aligned}\right .
\end{equation}
Taking Laplace transform for \eqref{eqhelpsemi} gives
\begin{equation}\label{eqhelpsolrep}
	\tilde{\bar{u}}_{h}=-\tilde{E}_{h}(z)P_{h}\mathcal{A}^{s}\tilde{u}+z^{\alpha-1}\tilde{E}_{h}(z)P_{h}u_{0}+\tilde{E}_{h}(z)P_{h}\tilde{f}.
\end{equation}
\begin{lemma}\label{lemest0}
	Let $u$ and $\bar{u}_{h}$ be the solutions of \eqref{eqretosol} and \eqref{eqhelpsemi}, respectively. Assume $u_{0}\in L^{2}(\Omega)$, $f(0)\in L^{2}(\Omega)$, and $\int_{0}^{t}\|f'(r)\|_{L^{2}(\Omega)}dr<\infty$. Then the following estimates hold
	\begin{equation*}
		\begin{aligned}
		&\|u-\bar{u}_{h}\|_{\hat{H}^{\sigma}(\Omega)}\\
		&\quad\leq\left \{
		\begin{aligned}
			&Ch^{2-\sigma}t^{-\alpha}\|u_{0}\|_{L^{2}(\Omega)}+Ch^{2-\sigma}\|f(0)\|_{L^{2}(\Omega)}\\
			&\qquad\qquad\qquad\qquad+Ch^{2-\sigma}\int_{0}^{t}\|f'(r)\|_{L^{2}(\Omega)}dr,\quad s\in(0,\frac{3}{4}),\\
			&Ch^{3.5-2s-\epsilon-\sigma}t^{-\alpha}\|u_{0}\|_{L^{2}(\Omega)}+Ch^{3.5-2s-\epsilon-\sigma}\|f(0)\|_{L^{2}(\Omega)}\\
			&\qquad\qquad\qquad\qquad+Ch^{3.5-2s-\epsilon-\sigma}\int_{0}^{t}\|f'(r)\|_{L^{2}(\Omega)}dr,\quad s\in[\frac{3}{4},1)\\
		\end{aligned}\right .
	\end{aligned}
	\end{equation*}
with $\sigma\in[0,1]$.
\end{lemma}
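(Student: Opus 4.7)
The plan is to work entirely on the Laplace side. Subtracting \eqref{eqhelpsolrep} from \eqref{eqsolrep0} gives a clean three-term decomposition of the error transform:
\begin{equation*}
\tilde{u}-\tilde{\bar{u}}_{h}=-\bigl(\tilde{E}(z)-\tilde{E}_{h}(z)P_{h}\bigr)\mathcal{A}^{s}\tilde{u}+z^{\alpha-1}\bigl(\tilde{E}(z)-\tilde{E}_{h}(z)P_{h}\bigr)u_{0}+\bigl(\tilde{E}(z)-\tilde{E}_{h}(z)P_{h}\bigr)\tilde{f}.
\end{equation*}
The key structural point is that the auxiliary equation \eqref{eqhelpsemi} is designed so that the fractional-Laplacian term is evaluated at the \emph{exact} solution $u$, not at $\bar{u}_{h}$; this eliminates any self-referential estimate and removes the need for a Grönwall closure in this lemma. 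After inverting the Laplace transform along $\Gamma_{\theta,\kappa}$, the $\hat{H}^{\sigma}(\Omega)$ norm of $u-\bar{u}_{h}$ splits into three contour integrals that I would estimate one at a time.

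For the $u_{0}$ and $f$ terms I would interpolate between the two displayed operator bounds, namely $\|\tilde{E}(z)-\tilde{E}_{h}(z)P_{h}\|\leq Ch^{2}$ and $\|\tilde{E}(z)-\tilde{E}_{h}(z)P_{h}\|\leq C|z|^{-\alpha}$, together with their $L^{2}\to\hat{H}^{1}$ counterpart, to obtain a mixed bound of the form $\|\tilde{E}(z)-\tilde{E}_{h}(z)P_{h}\|_{L^{2}\to\hat{H}^{\sigma}}\leq Ch^{2-\sigma-2\theta}|z|^{-\alpha\theta}$ for a suitable interpolation parameter $\theta\in[0,1]$. Substituting into the contour integrals and choosing $\theta$ so that the $|z|$-decay makes the integral converge while the $h$-power is $2-\sigma$ yields exactly $Ch^{2-\sigma}t^{-\alpha}\|u_{0}\|_{L^{2}(\Omega)}$, $Ch^{2-\sigma}\|f(0)\|_{L^{2}(\Omega)}$, and $Ch^{2-\sigma}\int_{0}^{t}\|f'(r)\|_{L^{2}(\Omega)}dr$ by the same manipulation used repeatedly in the proof of Theorem \ref{Thmreg} (write $f(t)=f(0)+\int_{0}^{t}f'(r)dr$ and integrate by parts / use Fubini).

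The delicate piece, and the main obstacle, is the $\mathcal{A}^{s}\tilde{u}$ contribution. The plan is to move the $\mathcal{A}^{s}$ onto the regularity of $u$: via Remark \ref{Reeqspace} one has $\|\mathcal{A}^{s}v\|_{\hat{H}^{-\rho}(\Omega)}\leq C\|v\|_{\hat{H}^{2s-\rho}(\Omega)}$ for appropriate $\rho$, so it is natural to employ the negative-order bound $\|\tilde{E}(z)-\tilde{E}_{h}(z)P_{h}\|_{\hat{H}^{-\rho}\to\hat{H}^{\sigma}}\leq Ch^{2-\rho-\sigma}$ obtained by interpolating the given $\hat{H}^{-1}\to L^{2}$, $L^{2}\to L^{2}$ and $L^{2}\to \hat{H}^{1}$ estimates (again combined with the $|z|^{-\alpha/2}$ decay for contour convergence). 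The choice of $\rho$ is dictated by the regularity Theorem \ref{Thmreg}, which supplies $u\in\hat{H}^{2\min(1,\,7/4-s-\epsilon)}(\Omega)$.

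For $s\in(0,3/4)$ the theorem gives $u\in\hat{H}^{2}(\Omega)$, so $\mathcal{A}^{s}u\in L^{2}(\Omega)$ (take $\rho=0$), and the $\mathcal{A}^{s}\tilde{u}$ contour integral is dominated by exactly the same $h^{2-\sigma}$ bound as the $u_{0}$ and $f$ terms, recovering the first case of the lemma. For $s\in[3/4,1)$ the regularity drops to $u\in\hat{H}^{2(7/4-s-\epsilon)}(\Omega)$, so $\mathcal{A}^{s}u\in\hat{H}^{7/2-4s-2\epsilon}(\Omega)$, forcing $\rho=4s-7/2+2\epsilon$; substituting into $h^{2-\rho-\sigma}$ produces the reduced exponent $h^{3.5-2s-\epsilon-\sigma}$ declared in the second branch. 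The bookkeeping obstacle is to verify that for every admissible $\sigma\in[0,1]$ there is a single choice of interpolation parameters simultaneously giving the stated $h$-power and enough $|z|$-decay to control $\int_{\Gamma_{\theta,\kappa}}|e^{zt}|\cdots|dz|$; once this is set up, the $t^{-\alpha}$, $\|f(0)\|$, and $\int\|f'\|$ factors appear automatically, and the lemma follows.
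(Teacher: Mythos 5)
Your overall frame matches the paper's: a Laplace-side decomposition into the $\mathcal{A}^{s}\tilde{u}$, $u_{0}$, $f(0)$, and $f'$ contributions, no Gr\"onwall closure, operator error bounds for $\tilde{E}(z)-\tilde{E}_{h}(z)P_{h}$, and interpolation between the $L^{2}$ and $\hat{H}^{1}$ estimates to reach general $\sigma\in[0,1]$. The $u_{0}$, $f(0)$ and $f'$ terms are unproblematic because their integrands already carry $|z|^{\alpha-1}$ or $|z|^{-1}$, so the contour integrals converge with the full $h$-power. The genuine gap is in your treatment of the $\mathcal{A}^{s}\tilde{u}$ term. You propose to apply $\tilde{E}(z)-\tilde{E}_{h}(z)P_{h}$ directly to $\mathcal{A}^{s}\tilde{u}$ and invoke the known regularity of $u(r)$ in time, which turns the term into a convolution $\int_{0}^{t}K_{h}(t-r)\,\mathcal{A}^{s}u(r)\,dr$ with $K_{h}(t)=\frac{1}{2\pi\mathbf{i}}\int_{\Gamma_{\theta,\kappa}}e^{zt}(\tilde{E}(z)-\tilde{E}_{h}(z)P_{h})\,dz$. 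But the full-rate bound $\|\tilde{E}(z)-\tilde{E}_{h}(z)P_{h}\|\leq Ch^{2}$ carries \emph{no} decay in $|z|$, and $\int_{\Gamma_{\theta,\kappa}}|e^{z(t-r)}|\,|dz|\sim (t-r)^{-1}$ is not integrable; any positive amount of $|z|$-decay must be bought by interpolating against $C|z|^{-\alpha}$, which costs a positive power of $h$. So the ``bookkeeping obstacle'' you flag is not resolvable in your setup: this route yields at best $h^{(2-\sigma)(1-\epsilon)}$ in the first branch, not the stated $h^{2-\sigma}$. The paper's way around this is to substitute the representation \eqref{eqsolrep0} into $\mathcal{A}^{s}\tilde{u}$ a second time, so that each resulting term contains the factor $\mathcal{A}^{s}\tilde{E}(z)$, whose operator norm decays like $|z|^{-(1-s)\alpha}$; this factor supplies the contour convergence while $\tilde{E}(z)-\tilde{E}_{h}(z)P_{h}$ contributes the clean $h^{2}$. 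Your argument is missing this second substitution.

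A second, smaller problem is the accounting in the branch $s\in[\frac{3}{4},1)$. With $\rho=4s-\frac{7}{2}+2\epsilon$ one gets $h^{2-\rho-\sigma}=h^{\frac{11}{2}-4s-2\epsilon-\sigma}$, which is \emph{not} $h^{\frac{7}{2}-2s-\epsilon-\sigma}$ (and is in fact better than the lemma claims for all $s<1$), so the stated exponent does not follow from the constraint you impose. The true bottleneck is not the drop in the Sobolev regularity of $u$ but the threshold in Remark \ref{Reeqspace}: the identifications $\hat{H}^{\beta}(\Omega)=H^{\beta}_{0}(\Omega)$ and the mapping property of $\mathcal{A}^{s}$ on the spectrally defined spaces are only available for $\beta<\frac{3}{2}$, which forces the intermediate exponent to satisfy $2s-\rho<\frac{3}{2}$, i.e. $\rho\geq 2s-\frac{3}{2}+\epsilon$, and this is the binding constraint for every $s<1$. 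Plugging that $\rho$ into $h^{2-\rho-\sigma}$ is what produces $h^{\frac{7}{2}-2s-\epsilon-\sigma}$. As written, your derivation asserts the correct exponent but does not actually produce it.
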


\begin{proof}
For $s\in(0,\frac{3}{4})$, one can get
\begin{equation*}
	\begin{aligned}
		&\|u-\bar{u}_{h}\|_{L^{2}(\Omega)}\\
		\leq
		&C\left \|\int_{\Gamma_{\theta,\kappa}}e^{zt}(\tilde{E}(z)-\tilde{E}_{h}(z)P_{h})\mathcal{A}^{s}\tilde{u}dz\right \|_{L^2(\Omega)} +C\left \|\int_{\Gamma_{\theta,\kappa}}e^{zt}z^{\alpha-1}(\tilde{E}(z)-\tilde{E}_{h}(z)P_{h})u_{0}dz\right \|_{L^{2}(\Omega)}\\
		& +C\left \|\int_{\Gamma_{\theta,\kappa}}e^{zt}z^{-1}(\tilde{E}(z)-\tilde{E}_{h}(z)P_{h})f(0)dz\right \|_{L^{2}(\Omega)}+C\left \|\int_{\Gamma_{\theta,\kappa}}e^{zt}z^{-1}(\tilde{E}(z)-\tilde{E}_{h}(z)P_{h})\tilde{f'}dz\right \|_{L^{2}(\Omega)}.
	\end{aligned}
\end{equation*}
Using \eqref{eqsolrep0}, Lemma \ref{lemeroper1}, and Theorem \ref{Thmreg}, there holds
\begin{equation*}
	\begin{aligned}
		&\left \|\int_{\Gamma_{\theta,\kappa}}e^{zt}(\tilde{E}(z)-\tilde{E}_{h}(z)P_{h})\mathcal{A}^{s}\tilde{u}dz\right \|_{L^2(\Omega)}\\
		\leq&C\left \|\int_{\Gamma_{\theta,\kappa}}e^{zt}(\tilde{E}(z)-\tilde{E}_{h}(z)P_{h})\mathcal{A}^{s}\tilde{E}(z)\mathcal{A}^{s}\tilde{u}dz\right \|_{L^2(\Omega)}\\
		&+C\left \|\int_{\Gamma_{\theta,\kappa}}e^{zt}(\tilde{E}(z)-\tilde{E}_{h}(z)P_{h})\mathcal{A}^{s}\tilde{E}(z)z^{\alpha-1}u_{0}dz\right \|_{L^2(\Omega)}\\
		&+C\left \|\int_{\Gamma_{\theta,\kappa}}e^{zt}(\tilde{E}(z)-\tilde{E}_{h}(z)P_{h})\mathcal{A}^{s}\tilde{E}(z)z^{-1}f(0)dz\right \|_{L^2(\Omega)}\\
		&+C\left \|\int_{\Gamma_{\theta,\kappa}}e^{zt}(\tilde{E}(z)-\tilde{E}_{h}(z)P_{h})\mathcal{A}^{s}\tilde{E}(z)z^{-1}\tilde{f}'dz\right \|_{L^2(\Omega)}\\
		\leq&Ch^{2}t^{-s\alpha}\|u_{0}\|_{L^{2}(\Omega)}+Ch^{2}\|f(0)\|_{L^{2}(\Omega)}\\
		&+C\int_{0}^{t}\int_{\Gamma_{\theta,\kappa}}|e^{z(t-r)}|\|(\tilde{E}(z)-\tilde{E}_{h}(z)P_{h})\|\|\mathcal{A}^{s}\tilde{E}(z)\||dz|\|\mathcal{A}^{s}u(r)\|_{L^{2}(\Omega)}dr\\
		&+C\int_{0}^{t}\int_{\Gamma_{\theta,\kappa}}|e^{z(t-r)}|\|(\tilde{E}(z)-\tilde{E}_{h}(z)P_{h})\|\|\mathcal{A}^{s}\tilde{E}(z)z^{-1}\||dz|\|f'(r)\|_{L^{2}(\Omega)}dr\\
		\leq&Ch^{2}t^{-s\alpha}\|u_{0}\|_{L^{2}(\Omega)}+Ch^{2}\|f(0)\|_{L^{2}(\Omega)}+Ch^{2}\int_{0}^{t}\|f'(r)\|_{L^{2}(\Omega)}dr,
	\end{aligned}
\end{equation*}
which leads to
\begin{equation*}
	\|u-\bar{u}_{h}\|_{L^{2}(\Omega)}\leq Ch^{2}t^{-\alpha}\|u_{0}\|_{L^{2}(\Omega)}+Ch^{2}\|f(0)\|_{L^{2}(\Omega)}+Ch^{2}\int_{0}^{t}\|f'(r)\|_{L^{2}(\Omega)}dr.
\end{equation*}
Similarly, it holds
\begin{equation*}
	\begin{aligned}
		\|u-\bar{u}_{h}\|_{\hat{H}^{1}(\Omega)}\leq Cht^{-\alpha}\|u_{0}\|_{L^{2}(\Omega)}+Ch\|f(0)\|_{L^{2}(\Omega)}+Ch\int_{0}^{t}\|f'(r)\|_{L^{2}(\Omega)}dr.
	\end{aligned}
\end{equation*}
Combining interpolation property \cite{Adams.2003Ss}, we obtain
\begin{equation*}
	\|u-\bar{u}_{h}\|_{\hat{H}^{\sigma}(\Omega)}\leq Ch^{2-\sigma}t^{-\alpha}\|u_{0}\|_{L^{2}(\Omega)}+Ch^{2-\sigma}\|f(0)\|_{L^{2}(\Omega)}+Ch^{2-\sigma}\int_{0}^{t}\|f'(r)\|_{L^{2}(\Omega)}dr
\end{equation*}
with $\sigma\in[0,1]$. Similarly, for $s\in[\frac{3}{4},1)$, there are
\begin{equation*}
	\begin{aligned}
		\|u-\bar{u}_{h}\|_{L^{2}(\Omega)}\leq& Ch^{3.5-2s-\epsilon}t^{-\alpha}\|u_{0}\|_{L^{2}(\Omega)}\\
		&+Ch^{3.5-2s-\epsilon}\|f(0)\|_{L^{2}(\Omega)}+Ch^{3.5-2s-\epsilon}\int_{0}^{t}\|f'(r)\|_{L^{2}(\Omega)}dr,\\
		\|u-\bar{u}_{h}\|_{\hat{H}^{1}(\Omega)}\leq& Ch^{2.5-2s-\epsilon}t^{-\alpha}\|u_{0}\|_{L^{2}(\Omega)}\\
		&+Ch^{2.5-2s-\epsilon}\|f(0)\|_{L^{2}(\Omega)}+Ch^{2.5-2s-\epsilon}\int_{0}^{t}\|f'(r)\|_{L^{2}(\Omega)}dr,
	\end{aligned}
\end{equation*}
and
\begin{equation*}
	\begin{aligned}
		\|u-\bar{u}_{h}\|_{\hat{H}^{
				\sigma}(\Omega)}\leq& Ch^{3.5-2s-\epsilon-\sigma}t^{-\alpha}\|u_{0}\|_{L^{2}(\Omega)}\\
		&+Ch^{3.5-2s-\epsilon-\sigma}\|f(0)\|_{L^{2}(\Omega)}+Ch^{3.5-2s-\epsilon-\sigma}\int_{0}^{t}\|f'(r)\|_{L^{2}(\Omega)}dr
	\end{aligned}
\end{equation*}
with $\sigma\in[0,1]$. The proof is completed. 
\end{proof}

Below we consider the estimate of $\bar{u}_{h}-u_{h}$ in some specific space $\mathbb{V}$ $($$\mathbb{V}$ is $L^{2}(\Omega)$ or $\hat{H}^{2s-1}(\Omega)$$)$. According to Eqs. \eqref{eqsemisolLp0} and \eqref{eqhelpsolrep}, one can split it into the following two parts
\begin{equation}\label{eqdef12}
	\begin{aligned}
		\|\bar{u}_{h}-u_{h}\|_{\mathbb{V}}\leq& C\left\|\int_{\Gamma_{\theta,\kappa}}e^{zt}(\tilde{E}_{h}(z)P_{h}\mathcal{A}^{s}\tilde{u}-\tilde{E}_{h}(z)\mathcal{A}^{s}_{h}\tilde{u}_{h})dz\right \|_{\mathbb{V}}\\
		\leq& C\left\|\int_{\Gamma_{\theta,\kappa}}e^{zt}(\tilde{E}_{h}(z)P_{h}\mathcal{A}^{s}\tilde{u}-\tilde{E}_{h}(z)\mathcal{A}^{s}_{h}R^{s}_{h}\tilde{u}_{h})dz\right \|_{\mathbb{V}}\\
		\leq& C\left\|\int_{\Gamma_{\theta,\kappa}}e^{zt}\tilde{E}_{h}(z)P_{h}\mathcal{A}^{s}(\tilde{u}-\tilde{u}_{h})dz\right \|_{\mathbb{V}}\\
		\leq& C\left\|\int_{\Gamma_{\theta,\kappa}}e^{zt}(\tilde{E}_{h}(z)P_{h}-\tilde{E}(z))\mathcal{A}^{s}(\tilde{u}-\tilde{u}_{h})dz\right \|_{\mathbb{V}}\\
		&+ C\left\|\int_{\Gamma_{\theta,\kappa}}e^{zt}\tilde{E}(z)\mathcal{A}^{s}(\tilde{u}-\tilde{u}_{h})dz\right \|_{\mathbb{V}}\\
		\leq &C\|\uppercase\expandafter{\romannumeral1}\|_{\mathbb{V}}+C\|\uppercase\expandafter{\romannumeral2}\|_{\mathbb{V}},
	\end{aligned}
\end{equation}
where we use the definition of $R_{h}^{s}$ and \eqref{eqARPA}.

 Next, we consider the estimates of $\uppercase\expandafter{\romannumeral1}$ and $\uppercase\expandafter{\romannumeral2}$ in different spaces.

\begin{lemma}\label{lemestp1}
	Let $\uppercase\expandafter{\romannumeral1}$ be defined in \eqref{eqdef12}, $u_{0}\in L^{2}(\Omega)$, $f(0)\in L^{2}(\Omega)$, and $\int_{0}^{t}\|f'(r)\|_{L^{2}(\Omega)}dr<\infty$. Then the following estimates hold
	\begin{equation*}
		\|\uppercase\expandafter{\romannumeral1}\|_{L^{2}(\Omega)}\leq\left \{
		\begin{aligned}
			&Ch^{(2-2s)(1-\epsilon)}\int_{0}^{t}(t-r)^{(1-s)\alpha\epsilon-1}\left\|u(r)-u_{h}(r)\right \|_{L^{2}(\Omega)}dr,\quad s\in(0,\frac{1}{2}],\\
			&Ch^{1-\epsilon}\int_{0}^{t}(t-r)^{\frac{\alpha\epsilon}{2}-1}\left\|u(r)-u_{h}(r)\right \|_{\hat{H}^{2s-1}(\Omega)}dr,\quad s\in(\frac{1}{2},1),
		\end{aligned}\right .
	\end{equation*}
and
\begin{equation*}
	\|\uppercase\expandafter{\romannumeral1}\|_{\hat{H}^{2s-1}(\Omega)}\leq C\int_{0}^{t}(t-r)^{(1-s)\alpha-1}\left\|u(r)-u_{h}(r)\right \|_{\hat{H}^{2s-1}(\Omega)}dr,\quad s\in(\frac{1}{2},1).
\end{equation*}
\end{lemma}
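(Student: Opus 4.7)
The plan is to apply the inverse Laplace transform to rewrite $\uppercase\expandafter{\romannumeral1}$ as a time convolution, then estimate the resolvent-type operator $(\tilde{E}_h(z)P_h-\tilde{E}(z))\mathcal{A}^{s}$ on the contour $\Gamma_{\theta,\kappa}$ by interpolating between an $h$-based bound and a $|z|^{-\alpha}$-based bound, and finally integrate along $\Gamma_{\theta,\kappa}$ using the standard estimate $\int_{\Gamma_{\theta,\kappa}}|e^{z\tau}||z|^{-\beta}|dz|\leq C\tau^{\beta-1}$. Since the three target estimates have the same structural shape (a time convolution with a power kernel), the work reduces to producing, for each $s$-range and each target norm $\mathbb{V}$, the correct pair of bounds on $\|(\tilde{E}_h(z)P_h-\tilde{E}(z))\mathcal{A}^{s}\|_{\mathbb{W}\to\mathbb{V}}$ for the appropriate source space $\mathbb{W}$.

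For the $L^{2}$ estimate in the regime $s\in(0,\tfrac{1}{2}]$, the key mapping property I will exploit is $\mathcal{A}^{s}:L^{2}(\Omega)\to \hat{H}^{-2s}(\Omega)$, which is available since $2s\leq 1$ (so Remark \ref{Reeqspace} identifies the relevant $\hat{H}$- and $H_{0}$-spaces). Combining this with Lemma \ref{lemeroper1} at $\sigma=2s$ yields $\|(\tilde{E}(z)-\tilde{E}_{h}(z)P_{h})\mathcal{A}^{s}v\|_{L^{2}(\Omega)}\leq Ch^{2-2s}\|v\|_{L^{2}(\Omega)}$. For the resolvent-type bound I use $\|A^{s}(z^{\alpha}+A)^{-1}\|\leq C|z|^{-\alpha(1-s)}$ together with the boundedness of $A^{-s}\mathcal{A}^{s}$ (and $A_{h}^{-s}P_{h}\mathcal{A}^{s}$, via $\|A_{h}^{-s}P_{h}A^{s}\|\leq C$ and the equivalence $A^s\sim\mathcal{A}^s$ on $\hat{H}^{2s}$ for $s\in[0,\tfrac{1}{2}]$) to obtain the complementary bound $C|z|^{-\alpha(1-s)}\|v\|_{L^{2}(\Omega)}$. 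Interpolating these two with weight $\epsilon$ gives $h^{(2-2s)(1-\epsilon)}|z|^{-\alpha(1-s)\epsilon}$, and contour integration produces the claimed kernel $(t-r)^{(1-s)\alpha\epsilon-1}$.

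For the $L^{2}$ estimate in the regime $s\in(\tfrac{1}{2},1)$ I instead factor $\mathcal{A}^{s}$ through $\hat{H}^{-1}(\Omega)$, using the boundedness of $\mathcal{A}^{s}:\hat{H}^{2s-1}(\Omega)\to\hat{H}^{-1}(\Omega)$. The given operator estimates $\|\tilde{E}(z)-\tilde{E}_{h}(z)P_{h}\|_{\hat{H}^{-1}(\Omega)\to L^{2}(\Omega)}\leq Ch$ and $\leq C|z|^{-\alpha/2}$ then directly produce two bounds on the composite operator whose interpolation yields $h^{1-\epsilon}|z|^{-\alpha\epsilon/2}$, giving the kernel $(t-r)^{\alpha\epsilon/2-1}$ after contour integration. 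For the $\hat{H}^{2s-1}$ estimate (also $s\in(\tfrac{1}{2},1)$) there is no cancellation in $h$; I will bound each summand separately by writing $\|A^{s-1/2}\tilde{E}(z)\mathcal{A}^{s}v\|_{L^{2}(\Omega)}$, commuting $A^{s-1/2}$ with $(z^{\alpha}+A)^{-1}$, and using $\mathcal{A}^{s}v=A^{1/2}(A^{-1/2}\mathcal{A}^{s}v)$ to land in the form $\|A^{s}(z^{\alpha}+A)^{-1}w\|_{L^{2}(\Omega)}\leq C|z|^{-\alpha(1-s)}\|w\|_{L^{2}(\Omega)}$, with $\|w\|_{L^{2}(\Omega)}=\|\mathcal{A}^{s}v\|_{\hat{H}^{-1}(\Omega)}\leq C\|v\|_{\hat{H}^{2s-1}(\Omega)}$; the discrete counterpart is treated analogously using the discrete resolvent estimate and boundedness of $A_{h}^{s-1/2}P_{h}A^{1/2-s}$ on the relevant range. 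Contour integration then gives $(t-r)^{(1-s)\alpha-1}$.

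The main obstacle will be justifying the mapping properties of $\mathcal{A}^{s}$ and $A^{s}_{h}P_{h}$ relating the non-local operator to the Laplacian-based scale $\hat{H}^{\cdot}(\Omega)$, particularly ensuring the chain $\|A_{h}^{-s}P_{h}\mathcal{A}^{s}v\|_{L^{2}(\Omega)}\leq C\|v\|_{L^{2}(\Omega)}$ in the regime $s\in(0,\tfrac{1}{2}]$, and the analogous discrete identity needed for the $\hat{H}^{2s-1}$-bound. Once these operator-level inequalities are in place, the remainder is a standard resolvent/Laplace-inversion calculation leading to the stated convolution kernels.
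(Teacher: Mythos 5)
Your proposal matches the paper's argument essentially step for step: in each regime the paper also factors $\mathcal{A}^{s}$ as $A^{\gamma}\,(A^{-\gamma}\mathcal{A}^{s})$ with $\gamma=s$ (for $s\le\tfrac12$) or $\gamma=\tfrac12$ (for $s>\tfrac12$), interpolates the $h$-dependent bound from Lemma \ref{lemeroper1} against the resolvent bound with weight $\epsilon$, and contour-integrates to get the stated convolution kernels, with the $\hat{H}^{2s-1}$ case handled by the pure resolvent estimate $\|A^{s-\frac12}(\tilde{E}_{h}(z)P_{h}-\tilde{E}(z))A^{\frac12}\|\le C|z|^{-(1-s)\alpha}$ and no $h$-gain. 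The proposal is correct and takes the same route.
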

\begin{proof}
	For $s\in(0,\frac{1}{2}]$, Lemma \ref{lemeroper1}, Remark \ref{Reeqspace}, and the fact $\mathcal{A}^{s}: H^{l}(\mathbb{R}^{n})\rightarrow H^{l-2s}(\mathbb{R}^{n})$ give
\begin{equation*}
	\begin{aligned}
		\|\uppercase\expandafter{\romannumeral1}\|_{L^{2}(\Omega)}\leq& C\left\|\int_{\Gamma_{\theta,\kappa}}e^{zt}(\tilde{E}_{h}(z)P_{h}-\tilde{E}(z))A^{s}A^{-s}\mathcal{A}^{s}(\tilde{u}-\tilde{u}_{h})dz\right \|_{L^{2}(\Omega)}\\
		\leq& C\int_{0}^{t}\int_{\Gamma_{\theta,\kappa}}|e^{z(t-r)}|\|(\tilde{E}_{h}(z)P_{h}-\tilde{E}(z))A^{s}\||dz|\\
		&\qquad\cdot\left\|A^{-s}\mathcal{A}^{s}(u(r)-u_{h}(r))\right \|_{L^{2}(\Omega)}dr\\
		\leq& Ch^{(2-2s)(1-\epsilon)}\int_{0}^{t}(t-r)^{(1-s)\alpha\epsilon-1}\left\|u(r)-u_{h}(r)\right \|_{L^{2}(\Omega)}dr.
	\end{aligned}
\end{equation*}

Similarly, for $s\in(\frac{1}{2},1)$, one can get
\begin{equation*}
	\begin{aligned}
		\|\uppercase\expandafter{\romannumeral1}\|_{L^{2}(\Omega)}\leq& C\left\|\int_{\Gamma_{\theta,\kappa}}e^{zt}(\tilde{E}_{h}(z)P_{h}-\tilde{E}(z))A^{\frac{1}{2}}A^{-\frac{1}{2}}\mathcal{A}^{s}(\tilde{u}-\tilde{u}_{h})dz\right \|_{L^{2}(\Omega)}\\
		\leq& C\int_{0}^{t}\int_{\Gamma_{\theta,\kappa}}|e^{z(t-r)}|\|(\tilde{E}_{h}(z)P_{h}-\tilde{E}(z))A^{\frac{1}{2}}\||dz|\\
		&\qquad\cdot\left\|A^{-\frac{1}{2}}\mathcal{A}^{s}(u(r)-u_{h}(r))\right \|_{L^{2}(\Omega)}dr\\
		\leq& Ch^{1-\epsilon}\int_{0}^{t}(t-r)^{\frac{\alpha\epsilon}{2}-1}\left\|u(r)-u_{h}(r)\right \|_{\hat{H}^{2s-1}(\Omega)}dr.
	\end{aligned}
\end{equation*}

On the other hand, for $s\in(\frac{1}{2},1)$, there holds
\begin{equation*}
	\begin{aligned}
		\|\uppercase\expandafter{\romannumeral1}\|_{\hat{H}^{2s-1}(\Omega)}\leq& C\left\|\int_{\Gamma_{\theta,\kappa}}e^{zt}A^{s-\frac{1}{2}}(\tilde{E}_{h}(z)P_{h}-\tilde{E}(z))A^{\frac{1}{2}}A^{-\frac{1}{2}}\mathcal{A}^{s}(\tilde{u}-\tilde{u}_{h})dz\right \|_{L^{2}(\Omega)}\\
		\leq& C\int_{0}^{t}\int_{\Gamma_{\theta,\kappa}}|e^{z(t-r)}|\|A^{s-\frac{1}{2}}(\tilde{E}_{h}(z)P_{h}-\tilde{E}(z))A^{\frac{1}{2}}\||dz|\\
		&\qquad\cdot\left\|A^{-\frac{1}{2}}\mathcal{A}^{s}(u(r)-u_{h}(r))\right \|_{L^{2}(\Omega)}dr\\
		\leq& C\int_{0}^{t}(t-r)^{(1-s)\alpha-1}\left\|u(r)-u_{h}(r)\right \|_{\hat{H}^{2s-1}(\Omega)}dr,
	\end{aligned}
\end{equation*}
where we have used Lemma \ref{lemeroper1}, Remark \ref{Reeqspace}, and the fact $\mathcal{A}^{s}: H^{l}(\mathbb{R}^{n})\rightarrow H^{l-2s}(\mathbb{R}^{n})$.
\end{proof}
\begin{lemma}\label{lemestp2}
	Let $\uppercase\expandafter{\romannumeral2}$ be defined in \eqref{eqdef12}. If $u_{0}\in L^{2}(\Omega)$, $f(0)\in L^{2}(\Omega)$, and $\int_{0}^{t}\|f'(r)\|_{L^{2}(\Omega)}dr<\infty$, then we have
	\begin{equation*}
		\|\uppercase\expandafter{\romannumeral2}\|_{L^{2}(\Omega)}\leq
		C\int_{0}^{t}(t-r)^{(1-s)\alpha-1}\|u(r)-u_{h}(r)\|_{L^{2}(\Omega)}dr,\quad s\in(0,\frac{3}{4})
	\end{equation*}
and
\begin{equation*}
	\|\uppercase\expandafter{\romannumeral2}\|_{\hat{H}^{2s-1}(\Omega)}\leq C\int_{0}^{t}(t-r)^{(1-s)\alpha-1}\|u(r)-u_{h}(r)\|_{\hat{H}^{2s-1}(\Omega)}dr,\quad s\in(\frac{1}{2},1).
\end{equation*}
\end{lemma}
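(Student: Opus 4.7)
The plan is to mirror the template used for $\uppercase\expandafter{\romannumeral1}$ in Lemma \ref{lemestp1}, but because $\uppercase\expandafter{\romannumeral2}$ involves only the continuous resolvent $\tilde E(z)=(z^\alpha+A)^{-1}$ (there is no finite element error factor here), the bound will be driven purely by the spectral estimate $\|A^\sigma(z^\alpha+A)^{-1}\|\le C|z|^{-\alpha(1-\sigma)}$ valid for $\sigma\in[0,1]$, combined with appropriate mapping properties of $\mathcal{A}^{s}$ read off from Remark \ref{Reeqspace}. In both cases I factor $\mathcal{A}^{s}=A^{\sigma}\cdot A^{-\sigma}\mathcal{A}^{s}$, absorb the positive power $A^{\sigma}$ into the resolvent so that $A^{\sigma}\tilde E(z)$ produces the decay $|z|^{-\alpha(1-\sigma)}$, and pull the bounded operator $A^{-\sigma}\mathcal{A}^{s}$ through to act on $u(r)-u_h(r)$ in the target norm.

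For the $L^{2}$-estimate with $s\in(0,\tfrac{3}{4})$ I would choose $\sigma=s$ and write
\begin{equation*}
\tilde E(z)\mathcal{A}^{s}(\tilde u-\tilde u_h)=\bigl(\tilde E(z)A^{s}\bigr)\,\bigl(A^{-s}\mathcal{A}^{s}(\tilde u-\tilde u_h)\bigr).
\end{equation*}
The first factor obeys $\|\tilde E(z)A^{s}\|\le C|z|^{-\alpha(1-s)}$. For the second factor, $\mathcal{A}^{s}$ maps $L^{2}(\mathbb R^{n})$ into $H^{-2s}(\mathbb R^{n})$, and by Remark \ref{Reeqspace} the restriction $2s<\tfrac{3}{2}$ (i.e. $s<\tfrac{3}{4}$) yields $H^{-2s}(\Omega)=\hat H^{-2s}(\Omega)$, so $A^{-s}\mathcal{A}^{s}:L^{2}(\Omega)\to L^{2}(\Omega)$ is bounded. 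Performing the inverse Laplace transform and using the standard contour estimate $\int_{\Gamma_{\theta,\kappa}}|e^{z(t-r)}||z|^{-\alpha(1-s)}|dz|\le C(t-r)^{(1-s)\alpha-1}$ gives precisely the stated bound.

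For the $\hat H^{2s-1}$-estimate with $s\in(\tfrac{1}{2},1)$ I would use $\|v\|_{\hat H^{2s-1}(\Omega)}=\|A^{s-1/2}v\|_{L^{2}(\Omega)}$ and the splitting $\mathcal{A}^{s}=A^{1/2}\cdot A^{-1/2}\mathcal{A}^{s}$, so that
\begin{equation*}
A^{s-1/2}\tilde E(z)\mathcal{A}^{s}=\bigl(A^{s-1/2}\tilde E(z)A^{1/2}\bigr)\,\bigl(A^{-1/2}\mathcal{A}^{s}\bigr)=A^{s}(z^{\alpha}+A)^{-1}\cdot A^{-1/2}\mathcal{A}^{s}.
\end{equation*}
Again $\|A^{s}(z^{\alpha}+A)^{-1}\|\le C|z|^{-\alpha(1-s)}$, while the mapping $\mathcal{A}^{s}:H^{2s-1}(\mathbb R^{n})\to H^{-1}(\mathbb R^{n})$ together with Remark \ref{Reeqspace} (which identifies $\hat H^{2s-1}(\Omega)=H_{0}^{2s-1}(\Omega)$ for $2s-1\in(0,1)$ and $\hat H^{-1}(\Omega)=H^{-1}(\Omega)$) implies that $A^{-1/2}\mathcal{A}^{s}:\hat H^{2s-1}(\Omega)\to L^{2}(\Omega)$ is bounded. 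The same contour estimate then produces the factor $(t-r)^{(1-s)\alpha-1}$.

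The only real obstacle is verifying the mapping properties of $A^{-\sigma}\mathcal{A}^{s}$ between the precise spaces that appear, which is where the range restrictions on $s$ arise: the condition $s<\tfrac{3}{4}$ in the $L^{2}$-bound is exactly what Remark \ref{Reeqspace} needs to identify $H^{-2s}$ with $\hat H^{-2s}$, and the condition $s>\tfrac{1}{2}$ in the second bound guarantees that the fractional exponent $2s-1$ is positive so that the norm $\|A^{s-1/2}\cdot\|_{L^{2}}$ is both defined and realizes $\hat H^{2s-1}=H_{0}^{2s-1}$. Once these identifications are made, the argument is a direct analog of the treatment of $\uppercase\expandafter{\romannumeral1}$ in Lemma \ref{lemestp1}, with the finite element error $h$-power simply absent.
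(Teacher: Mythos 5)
Your proposal is correct and follows essentially the same route as the paper: for the $L^{2}$ bound the paper also factors $\tilde E(z)\mathcal{A}^{s}=\bigl(\tilde E(z)A^{s}\bigr)\bigl(A^{-s}\mathcal{A}^{s}\bigr)$ and uses the resolvent estimate together with Remark \ref{Reeqspace}, and for the $\hat H^{2s-1}$ bound it uses exactly your splitting $A^{s-\frac12}\tilde E(z)A^{\frac12}\cdot A^{-\frac12}\mathcal{A}^{s}$. Your discussion of where the restrictions $s<\tfrac34$ and $s>\tfrac12$ enter matches the (implicit) role they play in the paper's argument.
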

\begin{proof}
For $s\in(0,\frac{3}{4})$, using resolvent estimate \cite{Jin.2015AaotLsftsewnd,Jin.2019NmftfeewndAco,Lubich.1996Ndeefaoaeewaptmt} and Remark \ref{Reeqspace}, one has
\begin{equation*}
	\begin{aligned}
		\|\uppercase\expandafter{\romannumeral2}\|_{L^{2}(\Omega)}\leq& C\left\|\int_{\Gamma_{\theta,\kappa}}e^{zt}\tilde{E}(z)A^{s}A^{-s}\mathcal{A}^{s}(\tilde{u}-\tilde{u}_{h})dz\right \|_{L^{2}(\Omega)}\\
		\leq& C\int_{0}^{t}\int_{\Gamma_{\theta,\kappa}}|e^{z(t-r)}|\|\tilde{E}(z)A^{s}\||dz|\left\|A^{-s}\mathcal{A}^{s}(u(r)-u_{h}(r))\right \|_{L^{2}(\Omega)}dr\\
		\leq& C\int_{0}^{t}(t-r)^{(1-s)\alpha-1}\|u(r)-u_{h}(r)\|_{L^{2}(\Omega)}dr.
	\end{aligned}
\end{equation*}
Moreover, for $s\in(\frac{1}{2},1)$, simple calculations lead to
\begin{equation*}
	\begin{aligned}
		\|\uppercase\expandafter{\romannumeral2}\|_{\hat{H}^{2s-1}(\Omega)}\leq& C\left\|\int_{\Gamma_{\theta,\kappa}}e^{zt}A^{s-\frac{1}{2}}\tilde{E}(z)A^{\frac{1}{2}}A^{-\frac{1}{2}}\mathcal{A}^{s}(\tilde{u}-\tilde{u}_{h})dz\right \|_{L^{2}(\Omega)}\\
		\leq& C\int_{0}^{t}(t-r)^{(1-s)\alpha-1}\|u(r)-u_{h}(r)\|_{\hat{H}^{2s-1}(\Omega)}dr.
	\end{aligned}
\end{equation*}
The proof is completed. 
\end{proof}

Thanks to the above lemmas, we can provide the following error estimate for the spatial semi-discrete scheme.

 \begin{theorem}\label{thmspa}
	Let $u$ and $u_{h}$ be the solutions of Eqs. \eqref{eqretosol} and \eqref{eqsemi}, respectively. Assuming $u_{0}\in L^{2}(\Omega)$, $f(0)\in L^{2}(\Omega)$, and $\int_{0}^{t}\|f'(r)\|_{L^{2}(\Omega)}dr<\infty$, we have
	\begin{equation*}
		\begin{aligned}
			&\|u-u_{h}\|_{L^{2}(\Omega)}\\
			&\quad\leq
			Ch^{2}t^{-\alpha}\|u_{0}\|_{L^{2}(\Omega)}+Ch^{2}\|f(0)\|_{L^{2}(\Omega)}+Ch^{2}\int_{0}^{t}\|f'(r)\|_{L^{2}(\Omega)}dr, \quad s\in(0,\frac{3}{4}),\\
		\end{aligned}
	\end{equation*}
	and
	\begin{equation*}
		\begin{aligned}
			&\|u-u_{h}\|_{\hat{H}^{2s-1}(\Omega)}\\
			&\quad\leq \left\{
			\begin{aligned}
				&Ct^{-\alpha}h^{3-2s}\|u_{0}\|_{L^{2}(\Omega)}+Ch^{3-2s}\|f(0)\|_{L^{2}(\Omega)}\\
				&\qquad\qquad\qquad+Ch^{3-2s}\int_{0}^{t}\|f'(r)\|_{L^{2}(\Omega)}dr, \quad s\in(\frac{1}{2},\frac{3}{4}),\\
				&Ct^{-\alpha}h^{4.5-4s-\epsilon}\|u_{0}\|_{L^{2}(\Omega)}+Ch^{4.5-4s-\epsilon}\|f(0)\|_{L^{2}(\Omega)}\\
				&\qquad\qquad\qquad+Ch^{4.5-4s-\epsilon}\int_{0}^{t}\|f'(r)\|_{L^{2}(\Omega)}dr, \quad s\in[\frac{3}{4},1).\\
			\end{aligned}
			\right.
		\end{aligned}
	\end{equation*}
\end{theorem}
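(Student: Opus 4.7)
I would prove Theorem \ref{thmspa} by inserting the auxiliary solution $\bar u_h$ via the triangle inequality $\|u-u_h\|_{\mathbb V}\le\|u-\bar u_h\|_{\mathbb V}+\|\bar u_h-u_h\|_{\mathbb V}$, with $\mathbb V$ either $L^2(\Omega)$ or $\hat H^{2s-1}(\Omega)$. The first term is controlled directly by Lemma \ref{lemest0}: taking $\sigma=0$ yields the $h^2$ data contribution for the $L^2$ estimate when $s\in(0,\tfrac34)$, while taking $\sigma=2s-1$ yields $h^{3-2s}$ (for $s<\tfrac34$) or $h^{4.5-4s-\epsilon}$ (for $s\geq\tfrac34$) for the $\hat H^{2s-1}$ estimate. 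The second term is decomposed through \eqref{eqdef12} into $\|I\|_{\mathbb V}+\|II\|_{\mathbb V}$ and bounded by Lemmas \ref{lemestp1} and \ref{lemestp2}, which express these pieces as Abel-type time convolutions of $\|u-u_h\|_{\mathbb V'}$ for an appropriate norm $\mathbb V'$. The Gr\"onwall inequality (Lemma \ref{lemgrondwall}) then closes the bound.

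\textbf{Order of the cases.} The cases must be handled in the correct order because Lemma \ref{lemestp1} couples the $L^2$ error at $s>\tfrac12$ to the $\hat H^{2s-1}$ error. First I would prove the $\hat H^{2s-1}$ estimate for $s\in(\tfrac12,1)$: here both Lemmas \ref{lemestp1} and \ref{lemestp2} supply $\|I\|_{\hat H^{2s-1}}+\|II\|_{\hat H^{2s-1}}\le C\int_0^t(t-r)^{(1-s)\alpha-1}\|u-u_h\|_{\hat H^{2s-1}}\,dr$, which is self-contained; combined with the data part from Lemma \ref{lemest0} and one application of Lemma \ref{lemgrondwall}, this gives the claimed $\hat H^{2s-1}$ rate. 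For the $L^2$ estimate with $s\in(\tfrac12,\tfrac34)$ I would then substitute the already-established $\hat H^{2s-1}$ bound into the second branch of Lemma \ref{lemestp1}, producing an $h$-factor of $h^{(1-\epsilon)+(3-2s)}=h^{4-2s-\epsilon}$, which exceeds $h^2$ on this range and is therefore absorbed into the $h^2$ data term; together with the $L^2$-convolution $\|II\|_{L^2}$, a second Gr\"onwall closes the $L^2$ estimate. The regime $s\in(0,\tfrac12]$ is simplest: both Lemma \ref{lemestp1} (first branch) and Lemma \ref{lemestp2} convolve against $\|u-u_h\|_{L^2}$, and Gr\"onwall is applied directly.

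\textbf{Main obstacle.} The principal technical point is to verify that the convolution remainders never degrade the $h$-exponent prescribed by Lemma \ref{lemest0}. The most delicate instance is the $L^2$ estimate for $s\in(0,\tfrac12]$, where Lemma \ref{lemestp1} only delivers the exponent $(2-2s)(1-\epsilon)$, which is strictly less than $2$; however, since this factor multiplies $\|u-u_h\|_{L^2}$ inside a Volterra integral with integrable kernel, Lemma \ref{lemgrondwall} absorbs it into a constant prefactor and the data rate $h^2$ survives intact. A secondary subtlety is that Lemma \ref{lemgrondwall} is stated with a single integral kernel, so I would combine the two convolution kernels appearing in $\|I\|_{\mathbb V}$ and $\|II\|_{\mathbb V}$ via the uniform bound $(t-r)^{-1+\beta_1}\le T^{\beta_1-\beta_2}(t-r)^{-1+\beta_2}$ on the finite horizon $[0,T]$, so that Lemma \ref{lemgrondwall} applies verbatim to the scalar function $\phi(t):=\|u(t)-u_h(t)\|_{\mathbb V'}$ and reproduces the right-hand side of Theorem \ref{thmspa} in each case.
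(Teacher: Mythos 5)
Your proposal follows essentially the same route as the paper's own proof: the triangle inequality through $\bar u_h$, Lemma \ref{lemest0} for the data term, the decomposition \eqref{eqdef12} with Lemmas \ref{lemestp1} and \ref{lemestp2} for the remainder, and Gr\"onwall to close, handling the $\hat H^{2s-1}(\Omega)$ estimate for $s>\tfrac12$ first and feeding it into the $L^2(\Omega)$ bound. Your explicit remarks on absorbing the sub-optimal factor $h^{(2-2s)(1-\epsilon)}$ into the Gr\"onwall constant and on merging the two convolution kernels are exactly the (tacit) steps the paper relies on, so the argument is correct as proposed.
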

\begin{proof}
	For $s\in(0,\frac{1}{2}]$, Lemmas \ref{lemest0}, \ref{lemestp1}, and \ref{lemestp2} give
	\begin{equation*}
		\begin{aligned}
			&\|u-u_{h}\|_{L^{2}(\Omega)}\\
			\leq&\|u-\bar{u}_{h}\|_{L^{2}(\Omega)}+\|\bar{u}_{h}-u_{h}\|_{L^{2}(\Omega)}\\
			\leq& Ch^{2}t^{-\alpha}\|u_{0}\|_{L^{2}(\Omega)}+Ch^{2}\|f(0)\|_{L^{2}(\Omega)}+Ch^{2}\int_{0}^{t}\|f'(r)\|_{L^{2}(\Omega)}dr\\
			& +Ch^{(2-2s)(1-\epsilon)}\int_{0}^{t}(t-r)^{(1-s)\alpha\epsilon-1}\left\|u(r)-u_{h}(r)\right \|_{L^{2}(\Omega)}dr\\
			&+C\int_{0}^{t}(t-r)^{(1-s)\alpha-1}\|u(r)-u_{h}(r)\|_{L^{2}(\Omega)}dr\\
			\leq& Ch^{2}t^{-\alpha}\|u_{0}\|_{L^{2}(\Omega)}+Ch^{2}\|f(0)\|_{L^{2}(\Omega)}+Ch^{2}\int_{0}^{t}\|f'(r)\|_{L^{2}(\Omega)}dr\\
			&+C\int_{0}^{t}(t-r)^{(1-s)\alpha\epsilon-1}\left\|u(r)-u_{h}(r)\right \|_{L^{2}(\Omega)}dr.
		\end{aligned}
	\end{equation*}
Thus, by Lemma \ref{lemgrondwall}, one has
\begin{equation*}
	\|u-u_{h}\|_{L^{2}(\Omega)}\leq Ch^{2}t^{-\alpha}\|u_{0}\|_{L^{2}(\Omega)}+Ch^{2}\|f(0)\|_{L^{2}(\Omega)}+Ch^{2}\int_{0}^{t}\|f'(r)\|_{L^{2}(\Omega)}dr.
\end{equation*}
For $s\in(\frac{1}{2},\frac{3}{4})$, according to Lemmas \ref{lemest0}, \ref{lemestp1}, and \ref{lemestp2}, the estimate of $\|u-u_{h}\|_{\hat{H}^{2s-1}(\Omega)}$ can be got, i.e.,
\begin{equation*}
	\begin{aligned}
		&\|u-u_{h}\|_{\hat{H}^{2s-1}(\Omega)}\\
		\leq&\|u-\bar{u}_{h}\|_{\hat{H}^{2s-1}(\Omega)}+\|\bar{u}_{h}-u_{h}\|_{\hat{H}^{2s-1}(\Omega)}\\
		\leq& Ch^{3-2s}t^{-\alpha}\|u_{0}\|_{L^{2}(\Omega)}+Ch^{3-2s}\|f(0)\|_{L^{2}(\Omega)}+Ch^{3-2s}\int_{0}^{t}\|f'(r)\|_{L^{2}(\Omega)}dr\\
		&+C\int_{0}^{t}(t-r)^{(1-s)\alpha-1}\|u(r)-u_{h}(r)\|_{\hat{H}^{2s-1}(\Omega)}dr.
	\end{aligned}
\end{equation*}
Thus one has
\begin{equation*}
	\|u-u_{h}\|_{\hat{H}^{2s-1}(\Omega)}\leq Ch^{3-2s}t^{-\alpha}\|u_{0}\|_{L^{2}(\Omega)}+Ch^{3-2s}\|f(0)\|_{L^{2}(\Omega)}+Ch^{3-2s}\int_{0}^{t}\|f'(r)\|_{L^{2}(\Omega)}dr.
\end{equation*}
Combining the above estimate, we get
\begin{equation*}
	\begin{aligned}
		&\|u-u_{h}\|_{L^{2}(\Omega)}\\
		\leq&\|u-\bar{u}_{h}\|_{L^{2}(\Omega)}+\|\bar{u}_{h}-u_{h}\|_{L^{2}(\Omega)}\\
		\leq& Ct^{-\alpha}h^{2}\|u_{0}\|_{L^{2}(\Omega)}+Ch^{2}\|f(0)\|_{L^{2}(\Omega)}+Ch^{2}\int_{0}^{t}\|f'(r)\|_{L^{2}(\Omega)}dr\\
		& +Ch^{1-\epsilon}\int_{0}^{t}(t-r)^{\frac{\alpha\epsilon}{2}-1}\left\|u(r)-u_{h}(r)\right \|_{\hat{H}^{2s-1}(\Omega)}dr\\
		&+C\int_{0}^{t}(t-r)^{(1-s)\alpha-1}\|u(r)-u_{h}(r)\|_{L^{2}(\Omega)}dr\\
		\leq& Ch^{2}t^{-\alpha}\|u_{0}\|_{L^{2}(\Omega)}+Ch^{2}\|f(0)\|_{L^{2}(\Omega)}+Ch^{2}\int_{0}^{t}\|f'(r)\|_{L^{2}(\Omega)}dr\\
		&+C\int_{0}^{t}(t-r)^{(1-s)\alpha-1}\|u(r)-u_{h}(r)\|_{L^{2}(\Omega)}dr,
	\end{aligned}
\end{equation*}
which leads to the desired result by Lemma \ref{lemgrondwall}.

Similarly, for $s\in(\frac{3}{4},1)$, there exists
\begin{equation*}
	\begin{aligned}
		&\|u-u_{h}\|_{\hat{H}^{2s-1}(\Omega)}\\
		&\qquad\leq Ch^{4.5-4s-\epsilon}t^{-\alpha}\|u_{0}\|_{L^{2}(\Omega)}+Ch^{4.5-4s-\epsilon}\|f(0)\|_{L^{2}(\Omega)}+Ch^{4.5-4s-\epsilon}\int_{0}^{t}\|f'(r)\|_{L^{2}(\Omega)}dr.
	\end{aligned}
\end{equation*}
The proof is completed.
\end{proof}

\subsection{Temporal error estimate}
 Introduce $\mathcal{L}_{h}=A_{h}+\mathcal{A}_{h}^{s}$ and $\mu_{h}^{n}=u_{h}^{n}-u_{h}^{0}$. Then the fully discrete scheme \eqref{equfullsche0} can be rewritten as
\begin{equation}\label{equfullsche}
	\begin{aligned}
		&\sum_{j=0}^{n-1}d^{(\alpha)}_{j}\mu^{n-j}_{h}+\mathcal{L}_{h}\mu^{n}_{h}=f^{n}_{h}-\mathcal{L}_{h}u^{0}_{h}.
	\end{aligned}
\end{equation}
Let $\mu_{h}=u_{h}-u_{h}^{0}$. Then the semi-discrete scheme \eqref{eqsemi} can also be represented as
\begin{equation}\label{equsemi1}
	\begin{aligned}
		&{}_{0}\partial^{\alpha}_{t}\mu_{h}+\mathcal{L}_{h}\mu_{h}=f_{h}-\mathcal{L}_{h}u^{0}_{h}.
	\end{aligned}
\end{equation}
Multiplying $\zeta^{n}$ on both sides of \eqref{equfullsche} and summing $n$ from $1$ to $\infty$ shows
\begin{equation*}
	\sum_{n=1}^{\infty}\sum_{j=0}^{n-1}d^{(\alpha)}_{j}\mu^{n-j}_{h}\zeta^{n}+\sum_{n=1}^{\infty}\mathcal{L}_{h}\mu^{n}_{h}\zeta^{n}=\sum_{n=1}^{\infty}f^{n}_{h}\zeta^{n}-\mathcal{L}_{h}\sum_{n=1}^{\infty}u^{0}_{h}\zeta^{n},
\end{equation*}
which leads to
\begin{equation*}
	\left (\varphi^{(\alpha)}(\zeta)+\mathcal{L}_{h}\right )\sum_{n=1}^{\infty}\mu^{n}_{h}\zeta^{n}=\sum_{n=1}^{\infty}f^{n}_{h}\zeta^{n}-\mathcal{L}_{h}\sum_{n=1}^{\infty}u^{0}_{h}\zeta^{n}
\end{equation*}
with
\begin{equation*}
	\begin{aligned}
		\varphi^{(\alpha)}(\zeta)=&\sum_{j=0}^{\infty}d^{(\alpha)}_{j}\zeta^{j}
		=\frac{\tau^{-\alpha}}{\Gamma(2-\alpha)}\frac{(1-\zeta)^{2}}{\zeta}Li_{\alpha-1}(\zeta).
	\end{aligned}
\end{equation*}
Here, the definitions of $d^{(\alpha)}_{j}$ can refer to \eqref{eqdefdja} and $Li_p(z)$ is defined as \cite{Lewin.1981Paaf}
\begin{equation*}
	Li_p(z)=\sum_{j=1}^{\infty}\frac{z^j}{j^p}.
\end{equation*}
Thus, we have
\begin{equation*}
	\sum_{n=1}^{\infty}\mu^{n}_{h}\zeta^{n}=\left (\varphi^{(\alpha)}(\zeta)+\mathcal{L}_{h}\right )^{-1}\left (\sum_{j=1}^{\infty}f^{j}_{h}\zeta^{j}-\mathcal{L}_{h}\sum_{j=1}^{\infty}u^{0}_{h}\zeta^{j}\right ),
\end{equation*}
which leads to
\begin{equation*}
	\mu^{n}_{h}=\frac{1}{2\pi\mathbf{i}}\int_{|\zeta|=\xi_{\tau}}\zeta^{-n-1}\left (\varphi^{(\alpha)}(\zeta)+\mathcal{L}_{h}\right )^{-1}\left (\sum_{j=1}^{\infty}f^{j}_{h}\zeta^{j}-\mathcal{L}_{h}\sum_{j=1}^{\infty}u^{0}_{h}\zeta^{j}\right )d\zeta
\end{equation*}
with $\xi_{\tau}=e^{-\tau(\kappa+1)}$. Taking $\zeta=e^{-z\tau}$, one obtains
\begin{equation*}
	\mu^{n}_{h}=\frac{\tau}{2\pi\mathbf{i}}\int_{\Gamma^{\tau}}e^{zt_{n}}\left (\varphi^{(\alpha)}(e^{-z\tau})+\mathcal{L}_{h}\right )^{-1}\left (\sum_{j=1}^{\infty}f^{j}_{h}e^{-zt_{j}}-\mathcal{L}_{h}\sum_{j=1}^{\infty}u^{0}_{h}e^{-zt_{j}}\right )dz,
\end{equation*}
where $\Gamma^{\tau}=\{z=\kappa+1+\mathbf{i}y:~y\in \mathbb{R}~{\rm and}~|y|\leq \pi/\tau\}$. Transforming $\Gamma^{\tau}$ to $\Gamma^{\tau}_{\theta,\kappa}=\{z\in\mathbb{C}:~\kappa\leq |z|\leq \frac{\pi}{\tau\sin(\theta)},~|\arg z|=\theta\}\cup\{z\in\mathbb{ C}:~|z|=\kappa,~|\arg z|\leq \theta\}$, we have
 \begin{equation*}
 	\mu^{n}_{h}=\frac{\tau}{2\pi\mathbf{i}}\int_{\Gamma^{\tau}_{\theta,\kappa}}e^{zt_{n}}\left (\varphi^{(\alpha)}(e^{-z\tau})+\mathcal{L}_{h}\right )^{-1}\left (\sum_{j=1}^{\infty}f^{j}_{h}e^{-zt_{j}}-\mathcal{L}_{h}\sum_{j=1}^{\infty}u^{0}_{h}e^{-zt_{j}}\right )dz.
 \end{equation*}
Since $f(t)=f(0)+\int_{0}^{t}f'(r)dr$ and taking $R_{h}(t)=P_{h}\int_{0}^{t}f'(r)dr$, one has
\begin{equation*}
	\begin{aligned}
		\mu^{n}_{h}=&\frac{\tau}{2\pi\mathbf{i}}\int_{\Gamma_{\theta,\kappa}^{\tau}}e^{zt_{n}}\left (\varphi^{(\alpha)}(e^{-z\tau})+\mathcal{L}_{h}\right )^{-1}\left (\frac{e^{-z\tau}}{1-e^{-z\tau}}f^{0}_{h}-\frac{e^{-z\tau}}{1-e^{-z\tau}}\mathcal{L}_{h}u^{0}_{h}\right )dz\\
		&+\frac{\tau}{2\pi\mathbf{i}}\int_{\Gamma_{\theta,\kappa}^{\tau}}e^{zt_{n}}\left (\varphi^{(\alpha)}(e^{-z\tau})+\mathcal{L}_{h}\right )^{-1}\sum_{j=1}^{\infty}R_{h}(t_{j})e^{-zt_{j}}dz.
	\end{aligned}
\end{equation*}

Similarly, the solution of \eqref{equsemi1} can also be written as
\begin{equation*}
	\begin{aligned}
		\mu_{h}(t_{n})=&\frac{1}{2\pi\mathbf{i}}\int_{\Gamma_{\theta,\kappa}}e^{zt_{n}}\left (z^{\alpha}+\mathcal{L}_{h}\right )^{-1}\left (z^{-1}f^{0}_{h}-z^{-1}\mathcal{L}_{h}u^{0}_{h}\right )dz\\
		&+\frac{1}{2\pi\mathbf{i}}\int_{\Gamma_{\theta,\kappa}}e^{zt_{n}}\left (z^{\alpha}+\mathcal{L}_{h}\right )^{-1}\tilde{R}_{h}dz.\\
	\end{aligned}
\end{equation*}

\begin{lemma}[\cite{Jin.2015AaotLsftsewnd,Yan.2018AAotMLSfTFPDEwND}]
	Let $z\in\Gamma_{\theta,\kappa}^{\tau}$ with $\theta\in(\frac{\pi}{2},\frac{5\pi}{6})$. Then there hold
	\begin{equation*}
		|\varphi^{(\alpha)}(e^{-z\tau})-z^{\alpha}|\leq C\tau^{2-\alpha}|z|^{2}
	\end{equation*}
and
\begin{equation*}
	|\varphi^{(\alpha)}(e^{-z\tau})|\geq C|z|\tau^{1-\alpha}.
\end{equation*}
\end{lemma}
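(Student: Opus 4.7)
The plan is to exploit the closed form
\begin{equation*}
\varphi^{(\alpha)}(e^{-z\tau})=\frac{\tau^{-\alpha}}{\Gamma(2-\alpha)}\cdot\frac{(1-e^{-z\tau})^{2}}{e^{-z\tau}}\cdot Li_{\alpha-1}(e^{-z\tau}),
\end{equation*}
substitute $w:=z\tau$, and compare the right-hand side to the continuous symbol $z^{\alpha}=\tau^{-\alpha}w^{\alpha}$. The main analytic tool is the Jonqui\`ere (Lindel\"of) expansion of the polylogarithm,
\begin{equation*}
Li_{\alpha-1}(e^{-w})=\Gamma(2-\alpha)\,w^{\alpha-2}+\sum_{k=0}^{\infty}\frac{(-1)^{k}\zeta(\alpha-1-k)}{k!}\,w^{k},\qquad |w|<2\pi,\ \alpha\notin\mathbb{Z}_{>0},
\end{equation*}
paired with the elementary Taylor expansion $(1-e^{-w})^{2}/e^{-w}=w^{2}+w^{4}/12+O(w^{6})$.

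For the first bound I would multiply these two expansions. The product of the leading factors gives exactly $w^{2}\cdot\Gamma(2-\alpha)w^{\alpha-2}/\Gamma(2-\alpha)\cdot \tau^{-\alpha}=\tau^{-\alpha}w^{\alpha}=z^{\alpha}$, so the subtraction $\varphi^{(\alpha)}(e^{-z\tau})-z^{\alpha}$ collects only the cross terms. Each such cross term is of the schematic form $\tau^{-\alpha}w^{2+j}$ with $j\geq 0$ or $\tau^{-\alpha}w^{\alpha+2}$, and on the contour $\Gamma^{\tau}_{\theta,\kappa}$ we have $|w|=|z|\tau$ bounded by $\pi/\sin\theta<2\pi$ (using $\theta<5\pi/6$), so $|w|^{j}$ for $j\geq 0$ is a bounded multiplicative factor. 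Consequently $|\varphi^{(\alpha)}(e^{-z\tau})-z^{\alpha}|\leq C\tau^{-\alpha}|w|^{2}=C\tau^{2-\alpha}|z|^{2}$. Uniformity over $\Gamma^{\tau}_{\theta,\kappa}$ follows because the restriction $\theta<5\pi/6$ keeps $w$ a definite distance from the branch points $2\pi\mathbf{i}k$ of $Li_{\alpha-1}$, so the series converges absolutely and term-by-term bounds can be summed.

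For the lower bound I would split the contour according to whether $|z\tau|\leq c_{0}$ for some small fixed $c_{0}$ or $|z\tau|>c_{0}$. In the regime $|z\tau|\leq c_{0}$, the first estimate gives $\varphi^{(\alpha)}(e^{-z\tau})=z^{\alpha}+O(\tau^{2-\alpha}|z|^{2})$, and after choosing $c_{0}$ small enough the remainder is absorbed, yielding $|\varphi^{(\alpha)}(e^{-z\tau})|\geq \tfrac{1}{2}|z|^{\alpha}\geq C|z|\tau^{1-\alpha}$, where the last inequality uses $|z|\tau\leq c_{0}\leq 1$ so that $|z|^{\alpha-1}\geq (|z|\tau)^{\alpha-1}\tau^{1-\alpha}\geq C\tau^{1-\alpha}$. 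In the regime $|z\tau|\geq c_{0}$, one works directly with the factorization: both $|(1-e^{-z\tau})^{2}/e^{-z\tau}|$ and $|Li_{\alpha-1}(e^{-z\tau})|$ are bounded below by positive constants on the compact set $\{c_{0}\leq|w|\leq\pi/\sin\theta,\ |\arg w|\in\{\theta\}\cup[0,2\pi)\cap\text{arc}\}$, because $1-e^{-w}$ vanishes only at $w\in 2\pi\mathbf{i}\mathbb{Z}$ and $Li_{\alpha-1}$ is analytic with only a branch point at $w=0$, both of which are avoided. Hence $|\varphi^{(\alpha)}(e^{-z\tau})|\geq C\tau^{-\alpha}\geq C|z|\tau^{1-\alpha}$ since $|z|\tau$ is bounded above.

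The main obstacle is the lower-bound estimate in the ``large $|z\tau|$'' regime: one must verify that the polylogarithm factor does not come arbitrarily close to zero on the relevant compact portion of the contour, which requires using its analytic structure (no zeros on the prescribed region, branch cut along the positive real axis of $\zeta>1$ avoided by choice of $\theta$) rather than a simple power series argument. Once that technical point is settled, piecing the two regimes together with the first bound yields both inequalities uniformly in $z\in\Gamma^{\tau}_{\theta,\kappa}$.
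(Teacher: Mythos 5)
First, a framing point: the paper does not prove this lemma at all --- it is imported verbatim from the cited references \cite{Jin.2015AaotLsftsewnd,Yan.2018AAotMLSfTFPDEwND} --- so there is no in-paper proof to compare against, and I am judging your argument on its own terms. Your treatment of the first inequality is correct and is essentially the standard argument: the Lindel\"of expansion $Li_{\alpha-1}(e^{-w})=\Gamma(2-\alpha)w^{\alpha-2}+\sum_{k\ge0}\frac{(-1)^k\zeta(\alpha-1-k)}{k!}w^k$ converges absolutely for $|w|<2\pi$; the contour keeps $|w|=|z|\tau\le\pi/\sin\theta<2\pi$ precisely because $\theta<5\pi/6$; the product of the leading terms reproduces $z^{\alpha}$ exactly; and every cross term carries a factor $\tau^{-\alpha}|w|^{2}=\tau^{2-\alpha}|z|^{2}$ times a quantity bounded uniformly on the contour. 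The small-$|z\tau|$ half of the lower bound (perturbation off $z^{\alpha}$, then $|z|^{\alpha}=|z|\tau^{1-\alpha}(|z|\tau)^{\alpha-1}\ge|z|\tau^{1-\alpha}$ since $|z|\tau\le c_{0}<1$) is also fine.

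The gap is exactly where you flagged it, and flagging it is not the same as closing it: in the regime $|z\tau|\ge c_{0}$ you need $|Li_{\alpha-1}(e^{-z\tau})|$ bounded below on a compact piece of the contour, and your justification --- that $Li_{\alpha-1}$ is ``analytic with only a branch point at $w=0$'' --- does not rule out zeros; an analytic function can vanish on a compact set that avoids all its singularities. Non-vanishing of polylogarithms of negative order away from the origin is not automatic: for instance $Li_{-2}(\zeta)=\zeta(1+\zeta)/(1-\zeta)^{3}$ vanishes at $\zeta=-1$. On the portion of the contour with $\Re z\ge0$ (so $|e^{-z\tau}|\le1$) you could close the gap cheaply from the coefficients themselves: $d^{(\alpha)}_{0}>0$, $d^{(\alpha)}_{j}<0$ for $j\ge1$, and $\sum_{j\ge0}d^{(\alpha)}_{j}=\tau^{-\alpha}\lim_{N\to\infty}b^{(\alpha)}_{N}=0$, whence $\Re\,\varphi^{(\alpha)}(\zeta)\ge0$ for $|\zeta|\le1$ with equality only at $\zeta=1$, giving non-vanishing there. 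But since $\theta>\pi/2$, the rays of $\Gamma^{\tau}_{\theta,\kappa}$ contain points with $\Re z<0$, where $|e^{-z\tau}|>1$ and one is working with the analytic continuation of $Li_{\alpha-1}$ across the unit circle; establishing a quantitative lower bound (or absence of zeros) for $Li_{\alpha-1}(e^{-z\tau})$ in that region is the actual technical content of the cited lemmas and requires an argument you have not supplied, e.g.\ a sector estimate for $Li_{\alpha-1}$ obtained from its integral representation. Until that step is furnished, the second inequality is asserted rather than proved.
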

 \begin{theorem}\label{thmtime}
	Assuming $u_{0}\in L^{2}(\Omega)$, $f(0)\in L^{2}(\Omega)$, and $\int_{0}^{t}\|f'(r)\|_{L^{2}(\Omega)}dr<\infty$, we have
	\begin{equation*}
		\begin{aligned}
			\|u_{h}(t_{n})-u^{n}_{h}\|_{L^{2}(\Omega)}\leq&C\tau t_{n}^{-1}\|u^{0}_{h}\|_{L^{2}(\Omega)}+C\tau t_{n}^{-1}\|f^{0}_{h}\|_{L^{2}(\Omega)}\\
			&+C\tau\int_{0}^{t_{n}}(t_{n}-r)^{\alpha-1}\|f'(r)\|_{L^{2}(\Omega)}dr.
		\end{aligned}
	\end{equation*}
\end{theorem}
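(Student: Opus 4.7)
The plan is to work directly from the two contour representations derived immediately before the theorem. Writing $u_h(t_n)-u_h^n = \mu_h(t_n) - \mu_h^n$ and splitting into the three natural pieces coming from $u_h^0$, $f_h^0$, and $R_h(t) = P_h\int_0^t f'(r)\,dr$, the task reduces to bounding three differences of contour integrals, one over $\Gamma_{\theta,\kappa}^{\tau}$ with kernel $(\varphi^{(\alpha)}(e^{-z\tau})+\mathcal{L}_h)^{-1}$, the other over $\Gamma_{\theta,\kappa}$ with kernel $(z^\alpha+\mathcal{L}_h)^{-1}$. First I would reconcile the contours by adjoining the missing tail rays $\{re^{\pm i\theta}: r\ge \pi/(\tau\sin\theta)\}$ to $\Gamma_{\theta,\kappa}^{\tau}$; on those tail rays the factor $|e^{zt_n}|=e^{rt_n\cos\theta}$ decays exponentially in $r$, and the resolvent bound $\|(\varphi^{(\alpha)}(e^{-z\tau})+\mathcal{L}_h)^{-1}\|\le C|\varphi^{(\alpha)}(e^{-z\tau})|^{-1}$ (valid since $\mathcal{L}_h=A_h+\mathcal{A}_h^s$ is positive and self-adjoint and $\varphi^{(\alpha)}(e^{-z\tau})$ is sectorial) suffices to make the resulting tail contribution exponentially small in $\tau^{-1}t_n$, hence absorbed into $C\tau t_n^{-1}$.

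Once both integrands live on the common contour $\Gamma_{\theta,\kappa}$, I would insert and subtract to isolate two algebraic differences. The first is the operator difference, which I would handle via
\begin{equation*}
(\varphi^{(\alpha)}(e^{-z\tau})+\mathcal{L}_h)^{-1} - (z^\alpha+\mathcal{L}_h)^{-1} = (\varphi^{(\alpha)}(e^{-z\tau})+\mathcal{L}_h)^{-1}\bigl(z^\alpha-\varphi^{(\alpha)}(e^{-z\tau})\bigr)(z^\alpha+\mathcal{L}_h)^{-1}.
\end{equation*}
Combining the preceding lemma's bounds $|\varphi^{(\alpha)}(e^{-z\tau})-z^\alpha|\le C\tau^{2-\alpha}|z|^2$ and $|\varphi^{(\alpha)}(e^{-z\tau})|\ge C|z|\tau^{1-\alpha}$ with the resolvent estimate $\|(z^\alpha+\mathcal{L}_h)^{-1}\|\le C|z|^{-\alpha}$ yields a bound of the form $C\tau^{2-\alpha}|z|^{2-2\alpha}$ on this operator difference. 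The second is the scalar difference $\tau e^{-z\tau}/(1-e^{-z\tau}) - z^{-1} = O(\tau)$, obtained from a Taylor expansion near $z\tau=0$. Plugging these into the $u_h^0$ and $f_h^0$ pieces and using the standard contour estimates $\int_{\Gamma_{\theta,\kappa}}|e^{zt_n}||z|^{-\beta-1}|dz|\le C t_n^{\beta}$ with appropriate $\beta$ produces the two $C\tau t_n^{-1}$ terms appearing in the statement.

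The main obstacle is the $R_h$-contribution, for which a single $t_n^{-1}$ cannot be factored out. Here one must compare the discrete-time transform $\tau\sum_{j=1}^\infty R_h(t_j)e^{-zt_j}$ against the continuous transform $\tilde R_h(z) = z^{-1}\widetilde{P_h f'}(z)$. I would proceed by summation by parts, rewriting the discrete sum in terms of $P_h f'$ integrated over subintervals $(t_{j-1},t_j]$, and then comparing it with $\int_0^\infty e^{-zr}P_h f'(r)\,dr$ to produce a quadrature remainder of size $O(\tau)$ per subinterval. After bounding this remainder and invoking Fubini, the remaining double integral is recognized as a convolution against a Mittag--Leffler-type solution operator whose $L^2$-operator norm at time $t_n-r$ is at most $C(t_n-r)^{\alpha-1}$, again via the standard estimate $\int_{\Gamma_{\theta,\kappa}}|e^{z(t_n-r)}||z|^{-\alpha}|dz|\le C(t_n-r)^{\alpha-1}$. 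This produces the final contribution $C\tau\int_0^{t_n}(t_n-r)^{\alpha-1}\|f'(r)\|_{L^2(\Omega)}\,dr$ and completes the estimate.
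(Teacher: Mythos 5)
Your proposal follows essentially the same route as the paper's proof: the same three-way splitting into the $u_h^0$, $f_h^0$, and $R_h$ contributions, the same treatment of the kernel discrepancy by separating a scalar difference $z^{-1}-\tau/(1-e^{-z\tau})=O(\tau)$ from the operator difference controlled through $|\varphi^{(\alpha)}(e^{-z\tau})-z^{\alpha}|\leq C\tau^{2-\alpha}|z|^{2}$ and the resolvent identity, and the same Abel-summation-plus-quadrature argument producing $C\tau\int_0^{t_n}(t_n-r)^{\alpha-1}\|f'(r)\|_{L^2(\Omega)}dr$.

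The one step I would not let stand as written is the reconciliation of contours by \emph{adjoining the tail rays to} $\Gamma^{\tau}_{\theta,\kappa}$ and integrating the discrete kernel $(\varphi^{(\alpha)}(e^{-z\tau})+\mathcal{L}_h)^{-1}$ out to $|z|=\infty$. The symbol $\varphi^{(\alpha)}(e^{-z\tau})$ is $2\pi\mathbf{i}/\tau$-periodic in $z$, so the lower bound $|\varphi^{(\alpha)}(e^{-z\tau})|\geq C|z|\tau^{1-\alpha}$ quoted in the lemma holds only on the truncated contour and is false on the tail rays; worse, as $\zeta=e^{-z\tau}$ winds around inside the disk, $\varphi^{(\alpha)}(\zeta)$ leaves any fixed sector, so the sectorial resolvent bound $\|(\varphi^{(\alpha)}(e^{-z\tau})+\mathcal{L}_h)^{-1}\|\leq C|\varphi^{(\alpha)}(e^{-z\tau})|^{-1}$ is not available there and the resolvent need not even be uniformly invertible. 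The representation of $\mu_h^n$ is only valid on the fundamental strip $|\operatorname{Im}z|\leq\pi/\tau$, and the correct (and standard) reconciliation goes the other way: keep the discrete integral on $\Gamma^{\tau}_{\theta,\kappa}$ and estimate the \emph{continuous} integrand on $\Gamma_{\theta,\kappa}\setminus\Gamma^{\tau}_{\theta,\kappa}$, where only $\|(z^{\alpha}+\mathcal{L}_h)^{-1}\|\leq C|z|^{-\alpha}$ and the exponential decay of $|e^{zt_n}|$ are needed. With that single correction the rest of your argument goes through and coincides with the paper's.
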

\begin{proof}
	According to the definitions of $\mu_{h}(t_{n})$ and $\mu^{n}_{h}$, it holds
	\begin{equation*}
		\begin{aligned}
			&\|\mu_{h}(t_{n})-\mu^{n}_{h}\|_{L^{2}(\Omega)}\\
			\leq&C\left\|\int_{\Gamma_{\theta,\kappa}}e^{zt_{n}}\left (z^{\alpha}+\mathcal{L}_{h}\right )^{-1}z^{-1}\mathcal{L}_{h}dz-\int_{\Gamma_{\theta,\kappa}^{\tau}}e^{zt_{n}}\left (\varphi^{(\alpha)}(e^{-z\tau})+\mathcal{L}_{h}\right )^{-1}\frac{\tau e^{-z\tau}}{1-e^{-z\tau}}\mathcal{L}_{h}dz\right\|\|u^{0}_{h}\|_{L^{2}(\Omega)}\\
			&+C\left\|\int_{\Gamma_{\theta,\kappa}}e^{zt_{n}}\left (z^{\alpha}+\mathcal{L}_{h}\right )^{-1}z^{-1}dz-\int_{\Gamma_{\theta,\kappa}^{\tau}}e^{zt_{n}}\left (\varphi^{(\alpha)}(e^{-z\tau})+\mathcal{L}_{h}\right )^{-1}\frac{\tau e^{-z\tau}}{1-e^{-z\tau}}dz\right\|\|f^{0}_{h}\|_{L^{2}(\Omega)}\\
			&+C\left \|\int_{\Gamma_{\theta,\kappa}}e^{zt_{n}}\left (z^{\alpha}+\mathcal{L}_{h}\right )^{-1}\tilde{R}_{h}dz-\int_{\Gamma_{\theta,\kappa}^{\tau}}e^{zt_{n}}\left (\varphi^{(\alpha)}(e^{-z\tau})+\mathcal{L}_{h}\right )^{-1}\tau\sum_{j=1}^{\infty}R_{h}(t_{j})e^{-zt_{j}}dz\right \|_{L^{2}(\Omega)}\\
			\leq&\uppercase\expandafter{\romannumeral1}+\uppercase\expandafter{\romannumeral2}+\uppercase\expandafter{\romannumeral3}.
		\end{aligned}
	\end{equation*}
For $\uppercase\expandafter{\romannumeral1}$, there holds

\begin{equation*}
	\begin{aligned}
		\uppercase\expandafter{\romannumeral1}\leq&C\Bigg(\left \|\int_{\Gamma_{\theta,\kappa}\backslash\Gamma_{\theta,\kappa}^{\tau}}e^{zt_{n}}\left (z^{\alpha}+\mathcal{L}_{h}\right )^{-1}z^{-1}\mathcal{L}_{h}dz\right \|\\
		&+\left \|\int_{\Gamma_{\theta,\kappa}^{\tau}}(e^{zt_{n}}-e^{zt_{n-1}})\left (z^{\alpha}+\mathcal{L}_{h}\right )^{-1}z^{-1}\mathcal{L}_{h}dz\right \|\\
		&+\left \|\int_{\Gamma_{\theta,\kappa}^{\tau}}e^{zt_{n-1}}\left(\left (z^{\alpha}+\mathcal{L}_{h}\right )^{-1}z^{-1}-\left (\varphi^{(\alpha)}(e^{-z\tau})+\mathcal{L}_{h}\right )^{-1}\frac{\tau }{1-e^{-z\tau}}\right) \mathcal{L}_{h}dz\right \|\Bigg)\|u^{0}_{h}\|_{L^{2}(\Omega)}.
	\end{aligned}
\end{equation*}
Using the following fact
\begin{equation*}
	\begin{aligned}
		&\left\|\left (\left (z^{\alpha}+\mathcal{L}_{h}\right )^{-1}z^{-1}-\left (\varphi^{(\alpha)}(e^{-z\tau})+\mathcal{L}_{h}\right )^{-1}\frac{\tau }{1-e^{-z\tau}}\right )\mathcal{L}_{h}\right \|\\
		\leq& \left\|\left (\left (z^{\alpha}+\mathcal{L}_{h}\right )^{-1}z^{-1}-\left (z^{\alpha}+\mathcal{L}_{h}\right )^{-1}\frac{\tau }{1-e^{-z\tau}}\right )\mathcal{L}_{h}\right \|\\
		&+\left\|\left (\left (z^{\alpha}+\mathcal{L}_{h}\right )^{-1}\frac{\tau }{1-e^{-z\tau}}-\left (\varphi^{(\alpha)}(e^{-z\tau})+\mathcal{L}_{h}\right )^{-1}\frac{\tau }{1-e^{-z\tau}}\right )\mathcal{L}_{h}\right \|\\
		\leq& C\tau
	\end{aligned}
\end{equation*}
and the resolvent estimate \cite{Jin.2015AaotLsftsewnd,Jin.2019NmftfeewndAco,Lubich.1996Ndeefaoaeewaptmt}, we have
\begin{equation*}
	\uppercase\expandafter{\romannumeral1}\leq C\tau t_{n}^{-1}\|u^{0}_{h}\|_{L^{2}(\Omega)}.
\end{equation*}
Similarly, one has
\begin{equation*}
	\uppercase\expandafter{\romannumeral2}\leq C\tau t_{n}^{-1}\|f^{0}_{h}\|_{L^{2}(\Omega)}.
\end{equation*}
Doing simple calculations yields
\begin{equation*}
	\begin{aligned}
		\tau\sum_{n=1}^{\infty}R_{h}(t_{n})e^{-zt_{n}}=&\tau\sum_{n=1}^{\infty}\int_{0}^{t_{n}}P_{h}f'(r)dre^{-zt_{n}}\\
		=&\tau\sum_{n=1}^{\infty}\sum_{j=1}^{n}\int_{t_{j-1}}^{t_{j}}P_{h}f'(r)dre^{-zt_{n}}\\
		=&\tau\sum_{j=1}^{\infty}\left (\int_{t_{j-1}}^{t_{j}}P_{h}f'(r)dr\sum_{n=j}^{\infty}e^{-zt_{n}}\right )\\
		=&\frac{\tau}{1-e^{-z\tau}}\sum_{j=1}^{\infty}\left (e^{-zt_{j}}\int_{t_{j-1}}^{t_{j}}P_{h}f'(r)dr\right ),
	\end{aligned}
\end{equation*}
which leads to
\begin{equation*}
	\begin{aligned}
		&\Bigg \|\int_{0}^{t_{n}}\int_{\Gamma_{\theta,\kappa}^{\tau}}e^{z(t_{n}-r)}\left (\varphi^{(\alpha)}(e^{-z\tau})+\mathcal{L}_{h}\right )^{-1}\frac{\tau}{1-e^{-z\tau}}dzP_{h}f'(r)dr\\
		&\qquad\qquad-\int_{\Gamma_{\theta,\kappa}^{\tau}}e^{zt_{n}}\left (\varphi^{(\alpha)}(e^{-z\tau})+\mathcal{L}_{h}\right )^{-1}\tau\sum_{j=1}^{\infty}R_{h}(t_{j})e^{-zt_{j}}dz\Bigg \|_{L^{2}(\Omega)}\\
		\leq &\Bigg \|\int_{0}^{t_{n}}\int_{\Gamma_{\theta,\kappa}^{\tau}}e^{z(t_{n}-r)}\left (\varphi^{(\alpha)}(e^{-z\tau})+\mathcal{L}_{h}\right )^{-1}\frac{\tau}{1-e^{-z\tau}}dzP_{h}f'(r)dr\\
		&\qquad\qquad-\sum_{j=1}^{n}\int_{t_{j-1}}^{t_{j}}\int_{\Gamma_{\theta,\kappa}^{\tau}}e^{z(t_{n}-t_{j})}\left (\varphi^{(\alpha)}(e^{-z\tau})+\mathcal{L}_{h}\right )^{-1}\frac{\tau}{1-e^{-z\tau}}dzP_{h}f'(r)dr\Bigg \|_{L^{2}(\Omega)}\\
		\leq &C\tau\int_{0}^{t_{n}}(t_{n}-r)^{\alpha-1}\|f'(r)\|_{L^{2}(\Omega)}dr.
	\end{aligned}
\end{equation*}
Thus
\begin{equation*}
	\begin{aligned}
		\uppercase\expandafter{\romannumeral3}\leq&\left \|\int_{0}^{t_{n}}\left (\int_{\Gamma_{\theta,\kappa}}e^{z(t_{n}-r)}\left (z^{\alpha}+\mathcal{L}_{h}\right )^{-1}z^{-1}dz-\int_{\Gamma_{\theta,\kappa}^{\tau}}e^{z(t_{n}-r)}\left (\varphi^{(\alpha)}(e^{-z\tau})+\mathcal{L}_{h}\right )^{-1}\frac{\tau}{1-e^{-z\tau}}dz\right )P_{h}f'(r)dr\right \|_{L^{2}(\Omega)}\\
		&+C\tau\int_{0}^{t_{n}}(t_{n}-r)^{\alpha-1}\|f'(r)\|_{L^{2}(\Omega)}dr\\
		\leq&\left \|\int_{0}^{t_{n}}\int_{\Gamma_{\theta,\kappa}\backslash\Gamma_{\theta,\kappa}^{\tau}}e^{z(t_{n}-r)}\left (z^{\alpha}+\mathcal{L}_{h}\right )^{-1}z^{-1}dzP_{h}f'(r)dr\right \|_{L^{2}(\Omega)}\\
		&+\left \|\int_{0}^{t_{n}}\int_{\Gamma_{\theta,\kappa}^{\tau}}e^{z(t_{n}-r)}\left (\left (z^{\alpha}+\mathcal{L}_{h}\right )^{-1}z^{-1}-\left (\varphi^{(\alpha)}(e^{-z\tau})+\mathcal{L}_{h}\right )^{-1}\frac{\tau}{1-e^{-z\tau}}\right )dzP_{h}f'(r)dr\right \|_{L^{2}(\Omega)}\\
		&+C\tau\int_{0}^{t_{n}}(t_{n}-r)^{\alpha-1}\|f'(r)\|_{L^{2}(\Omega)}dr.
	\end{aligned}
\end{equation*}
Similar to the estimation of $\uppercase\expandafter{\romannumeral1}$, we have
\begin{equation*}
	\uppercase\expandafter{\romannumeral3}\leq C\tau\int_{0}^{t_{n}}(t_{n}-r)^{\alpha-1}\|f'(r)\|_{L^{2}(\Omega)}dr.
\end{equation*}
Collecting the above estimates leads to the desired results.
\end{proof}

\section{Numerical experiments}
In this section, some numerical examples are presented to validate our theoretical results. Here, we take $\Omega=(0,1)$ and $T=1$. The following  two initial values and source terms will  be used
\begin{enumerate}[(a)]
	\item\label{conda} \begin{equation*}
		u_{0}(x)=\chi_{(0.5,1)}(x),\quad f(x,t)=0;
	\end{equation*}
\item \label{condb}
\begin{equation*}
	u_{0}(x)=0,\quad f(x,t)=t^{0.1}x^{-0.2},
\end{equation*}
\end{enumerate}
where $\chi_{(a,b)}(x)$ means the characteristic function on $(a,b)$. Due to the exact solution is unknown, we use $e_{h}$ and $e_{\tau}$ to measure spatial and temporal errors, whose definitions are
\begin{equation*}
	e_{h}=u_{h}-u_{h/2},\quad e_{\tau}=u_{\tau}-u_{\tau/2}.
\end{equation*}
Here $u_{h}$ and $u_{\tau}$  denote the numerical solutions under the mesh size $h$ and time stepsize $\tau$, respectively. Thus the resulting convergence rates in some specific space $\mathbb{V}$ can be calculated by
\begin{equation*}
	Rate=\frac{\log(\|e_{h}\|_{\mathbb{V}}/\|e_{h/2}\|_{\mathbb{V}})}{\log(2)}
\end{equation*}
and
\begin{equation*}
	Rate=\frac{\log(\|e_{\tau}\|_{\mathbb{V}}/\|e_{\tau/2}\|_{\mathbb{V}})}{\log(2)}.
\end{equation*}
\begin{example}
We take \eqref{conda} as the initial value and source term to verify the convergence in temporal direction. Here, to avoid the influence of the spatial errors on temporal errors, we take $h=1/512$. The corresponding $L^{2}$ errors and convergence rates are presented in Table \ref{tab:timehom} and all results agree with Theorem \ref{thmtime}.
\begin{table}[htbp]
	\caption{$L^{2}$ errors and convergence rates with the initial value and source term \eqref{conda} }
	\begin{tabular}{ccccccc}
		\hline\noalign{\smallskip}
		$(\alpha,s)\backslash 1/\tau$ & 16 & 32 & 64 & 128 & 256 \\
		\noalign{\smallskip}\hline\noalign{\smallskip}
		(0.4,0.3) & 1.722E-04 & 8.360E-05 & 4.116E-05 & 2.041E-05 & 1.015E-05 \\
		& Rate & 1.0425 & 1.0224 & 1.0122 & 1.0068 \\
		
		(0.4,0.7) & 1.435E-04 & 6.972E-05 & 3.435E-05 & 1.704E-05 & 8.481E-06 \\
		&Rate  & 1.0410 & 1.0214 & 1.0115 & 1.0063 \\
		(0.8,0.3) & 1.843E-04 & 8.519E-05 & 4.074E-05 & 1.979E-05 & 9.695E-06 \\
		&Rate  & 1.1130 & 1.0642 & 1.0415 & 1.0297 \\
		(0.8,0.7) & 1.396E-04 & 6.519E-05 & 3.137E-05 & 1.531E-05 & 7.522E-06 \\
		&Rate  & 1.0982 & 1.0554 & 1.0353 & 1.0248 \\
		\noalign{\smallskip}\hline
	\end{tabular}
	\label{tab:timehom}
\end{table}

\end{example}
\begin{example}
	
Here, we show the $L^{2}$ errors and convergence rates in temporal direction with the initial value and source term \eqref{condb}. We take $h=1/512$ to decrease the influence caused by spatial discretization. All the corresponding results are shown in  Table \ref{tab:timeimhom} and in excellent
agreement with the theoretical predictions.

\begin{table}[htbp]
	\caption{$L^{2}$ errors and convergence rates with the initial value and source term \eqref{condb}}
	\begin{tabular}{ccccccc}
		\hline\noalign{\smallskip}
		$(\alpha,s)\backslash 1/\tau$& 16 & 32 & 64 & 128 & 256 \\
		\noalign{\smallskip}\hline\noalign{\smallskip}
		(0.3,0.4) & 1.845E-05 & 8.321E-06 & 3.805E-06 & 1.754E-06 & 8.124E-07 \\
		&Rate  & 1.1490 & 1.1287 & 1.1172 & 1.1105 \\
		(0.3,0.8) & 1.143E-05 & 5.155E-06 & 2.359E-06 & 1.088E-06 & 5.038E-07 \\
		&Rate  & 1.1482 & 1.1281 & 1.1168 & 1.1102 \\
		(0.7,0.4) & 3.087E-05 & 1.343E-05 & 5.975E-06 & 2.691E-06 & 1.221E-06 \\
		&Rate  & 1.2010 & 1.1682 & 1.1506 & 1.1401 \\
		(0.7,0.8) & 1.774E-05 & 7.751E-06 & 3.459E-06 & 1.562E-06 & 7.099E-07 \\
		&Rate  & 1.1946 & 1.1640 & 1.1474 & 1.1373 \\
		\noalign{\smallskip}\hline
	\end{tabular}
	\label{tab:timeimhom}
\end{table}
\end{example}

\begin{example}
	In this example, we validate the spatial convergence of our scheme with the initial value and source term \eqref{conda}. Here we take $\tau=1/512$ to avoid the influence on errors caused by temporal discretization. Tables \ref{tab:spahoml20} and \ref{tab:spahomH2s1} show the $L^{2}$ errors with $s\in(0,\frac{3}{4})$ and $\hat{H}^{2s-1}$ errors with $s\in(\frac{3}{4},1)$ and all convergence rates agree with the predicted rates in Theorem \ref{thmspa}. Moreover, we provide $L^{2}(\Omega)$ errors and convergence rates when $s\in (\frac{3}{4},1)$ in Table \ref{tab:spahoml21} and the corresponding convergence rates exactly coincide with the Sobolev regularity of the solution.
	\begin{table}[htbp]
		\caption{$L^{2}$ errors and convergence rates with the initial value and source term \eqref{conda} and $s\in(0,\frac{3}{4})$}
		\begin{tabular}{cccccc}
			\hline\noalign{\smallskip}
			$(\alpha,s)\backslash 1/h$ & 16 & 32 & 64 & 128 & 256   \\
				\noalign{\smallskip}\hline\noalign{\smallskip}
			(0.4,0.3) & 1.152E-04 & 2.879E-05 & 7.198E-06 & 1.799E-06 & 4.499E-07  \\
			&Rate  & 2.0002 & 2.0000 & 2.0000 & 1.9998   \\
			(0.6,0.3) & 8.099E-05 & 2.025E-05 & 5.061E-06 & 1.265E-06 & 3.164E-07   \\
			&Rate  & 2.0002 & 2.0000 & 2.0000 & 1.9998   \\
			(0.4,0.7) & 9.964E-05 & 2.531E-05 & 6.427E-06 & 1.631E-06 & 4.133E-07   \\
			&Rate  & 1.9772 & 1.9774 & 1.9786 & 1.9804   \\
			(0.6,0.7) & 6.937E-05 & 1.762E-05 & 4.474E-06 & 1.135E-06 & 2.878E-07   \\
			&Rate  & 1.9773 & 1.9773 & 1.9783 & 1.9801   \\
			\noalign{\smallskip}\hline
		\end{tabular}
		\label{tab:spahoml20}
	\end{table}
	
	\begin{table}[htbp]
		\caption{$\hat{H}^{2s-1}(\Omega)$ errors and convergence rates with the initial value and source term \eqref{conda} and $s\in(\frac{3}{4},1)$}
		\begin{tabular}{cccccc}
			\hline\noalign{\smallskip}
			$(\alpha,s)\backslash 1/h$ & 16 & 32 & 64 & 128 & 256 \\
			\noalign{\smallskip}\hline\noalign{\smallskip}
			(0.3,0.8) & 9.836E-04 & 3.777E-04 & 1.460E-04 & 5.676E-05 & 2.216E-05 \\
			&Rate  & 1.3808 & 1.3711 & 1.3633 & 1.3568 \\
			(0.8,0.8) & 2.950E-04 & 1.133E-04 & 4.387E-05 & 1.707E-05 & 6.677E-06 \\
			&Rate  & 1.3799 & 1.3694 & 1.3613 & 1.3546 \\
			(0.3,0.9) & 2.098E-03 & 9.365E-04 & 4.227E-04 & 1.936E-04 & 9.042E-05 \\
			&Rate  & 1.1640 & 1.1477 & 1.1264 & 1.0984 \\
			(0.3,0.9) & 6.263E-04 & 2.796E-04 & 1.263E-04 & 5.795E-05 & 2.712E-05 \\
			&Rate  & 1.1635 & 1.1464 & 1.1241 & 1.0952 \\
			\noalign{\smallskip}\hline
		\end{tabular}
		\label{tab:spahomH2s1}
	\end{table}

\begin{table}[htbp]
	\caption{$L^{2}$ errors and convergence rates with the initial value and source term \eqref{conda} and $s\in(\frac{3}{4},1)$}
	\begin{tabular}{cccccc}
		\hline\noalign{\smallskip}
		$(\alpha,s)\backslash 1/h$ & 16 & 32 & 64 & 128 & 256 \\
		\noalign{\smallskip} \hline\noalign{\smallskip}
		(0.3,0.8) & 1.046E-04 & 2.704E-05 & 7.049E-06 & 1.853E-06 & 4.908E-07 \\
		&Rate  & 1.9515 & 1.9394 & 1.9277 & 1.9166 \\
		(0.8,0.8) & 3.253E-05 & 8.411E-06 & 2.196E-06 & 5.784E-07 & 1.536E-07 \\
		&Rate  & 1.9513 & 1.9375 & 1.9246 & 1.9126 \\
		(0.3,0.9) & 9.104E-05 & 2.360E-05 & 6.253E-06 & 1.707E-06 & 4.840E-07 \\
		&Rate  & 1.9479 & 1.9160 & 1.8729 & 1.8185 \\
		(0.3,0.9) & 2.813E-05 & 7.290E-06 & 1.935E-06 & 5.300E-07 & 1.511E-07 \\
		&Rate  & 1.9480 & 1.9138 & 1.8680 & 1.8107 \\
		\noalign{\smallskip} \hline
	\end{tabular}
	\label{tab:spahoml21}
\end{table}

\end{example}

\begin{example}
		Lastly, we validate the spatial convergence with the initial value and source term \eqref{condb} and choose $\tau=1/512$. We present the $L^{2}$ errors with $s\in(0,\frac{3}{4})$ and $\hat{H}^{2s-1}$ errors with $s\in(\frac{3}{4},1)$ in Tables \ref{tab:spaimhoml20} and \ref{tab:spaimhomH2s1} and all the convergence rates are consistent with results in Theorem \ref{thmspa}. Moreover, we provide $L^{2}(\Omega)$ errors and convergence rates with $s\in (\frac{3}{4},1)$ in Table \ref{tab:spaimhoml21} and the corresponding convergence rates exactly agree with the Sobolev regularity of the solution.
	\begin{table}[htbp]
		\caption{$L^{2}$ errors and convergence rates with the initial value and source term \eqref{condb} and $s\in(0,\frac{3}{4})$}
		\begin{tabular}{cccccc}
			\hline \noalign{\smallskip}
			$(\alpha,s)\backslash 1/h$ & 16 & 32 & 64 & 128 & 256 \\
			\noalign{\smallskip} \hline \noalign{\smallskip}
			(0.3,0.2) & 2.917E-04 & 7.361E-05 & 1.851E-05 & 4.645E-06 & 1.164E-06 \\
			& Rate & 1.9867 & 1.9916 & 1.9946 & 1.9966 \\
			(0.8,0.2) & 3.037E-04 & 7.657E-05 & 1.925E-05 & 4.828E-06 & 1.210E-06 \\
			& Rate & 1.9878 & 1.9922 & 1.9950 & 1.9968 \\
			(0.3,0.6) & 2.809E-04 & 7.209E-05 & 1.836E-05 & 4.649E-06 & 1.172E-06 \\
			&Rate & 1.9622 & 1.9734 & 1.9815 & 1.9880 \\
			(0.8,0.6) & 2.900E-04 & 7.434E-05 & 1.892E-05 & 4.790E-06 & 1.207E-06 \\
			&Rate & 1.9636 & 1.9742 & 1.9819 & 1.9884 \\
			\noalign{\smallskip} \hline
		\end{tabular}
		\label{tab:spaimhoml20}
	\end{table}

	\begin{table}[htbp]
		\caption{$\hat{H}^{2s-1}$ errors and convergence rates with the initial value and source term \eqref{condb} and $s\in(\frac{3}{4},1)$}
		\begin{tabular}{cccccc}
			\hline\noalign{\smallskip}
			$(\alpha,s)\backslash 1/h$ & 16 & 32 & 64 & 128 & 256 \\
			\noalign{\smallskip} \hline\noalign{\smallskip}
			(0.4,0.8) & 2.387E-03 & 9.462E-04 & 3.751E-04 & 1.488E-04 & 5.906E-05 \\
			& 0 & 1.3348 & 1.3348 & 1.3341 & 1.3329 \\
			(0.6,0.8) & 2.403E-03 & 9.527E-04 & 3.777E-04 & 1.498E-04 & 5.947E-05 \\
			& 0 & 1.3350 & 1.3349 & 1.3341 & 1.3328 \\
			(0.4,0.9) & 5.050E-03 & 2.322E-03 & 1.078E-03 & 5.079E-04 & 2.436E-04 \\
			& 0 & 1.1209 & 1.1066 & 1.0862 & 1.0600 \\
			(0.6,0.9) & 5.083E-03 & 2.337E-03 & 1.085E-03 & 5.112E-04 & 2.452E-04 \\
			& 0 & 1.1211 & 1.1067 & 1.0862 & 1.0599 \\
			\noalign{\smallskip} \hline
		\end{tabular}
		\label{tab:spaimhomH2s1}
	\end{table}

	\begin{table}[htbp]
		\caption{$L^{2}$ errors and convergence rates with the initial value and source term \eqref{condb} and $s\in(\frac{3}{4},1)$}
		\begin{tabular}{cccccc}
			\hline\noalign{\smallskip}
			$(\alpha,s)\backslash 1/h$ & 16 & 32 & 64 & 128 & 256 \\
			\noalign{\smallskip} \hline\noalign{\smallskip}
			(0.4,0.8) & 2.604E-04 & 6.996E-05 & 1.884E-05 & 5.087E-06 & 1.378E-06 \\
			& 0 & 1.8963 & 1.8928 & 1.8888 & 1.8846 \\
			(0.6,0.8) & 2.632E-04 & 7.067E-05 & 1.903E-05 & 5.138E-06 & 1.392E-06 \\
			& 0 & 1.8967 & 1.8930 & 1.8887 & 1.8844 \\
			(0.4,0.9) & 2.244E-04 & 6.064E-05 & 1.672E-05 & 4.739E-06 & 1.387E-06 \\
			& 0 & 1.8879 & 1.8582 & 1.8193 & 1.7722 \\
			(0.6,0.9) & 2.267E-04 & 6.122E-05 & 1.688E-05 & 4.785E-06 & 1.401E-06 \\
			& 0 & 1.8885 & 1.8584 & 1.8191 & 1.7717 \\
			\noalign{\smallskip} \hline
		\end{tabular}
		\label{tab:spaimhoml21}
	\end{table}
	
\end{example}
\section{Conclusions}
In this paper, the fractional Fokker-Planck equation involving two diffusion operators with different scales is derived from the framework of the L\'evy process and this kind of equation can describe the physical phenomena more delicately. We use $L_{1}$ approximation in time and finite element methods in space to get the numerical scheme of the equation. Thanks to the sharp regularity estimate of the solution, we optimally obtain the spatial and temporal error estimates. Extensive numerical experiments validate the theoretical results.

\section*{Acknowledgements}
This work was supported by the National Natural Science Foundation of China under Grant No. 12071195, the AI and Big Data Funds under Grant No. 2019620005000775, and
the Fundamental Research Funds for the Central Universities under Grant No. lzujbky-2021-it26.

\bibliographystyle{spmpsci}
\bibliography{dbo}
\end{document}